\title{Distributed Zero-Order Optimization under Adversarial Noise}
\declaretheorem[name=Theorem,refname=Thm.]{theorem}
\declaretheorem[name=Lemma,sibling=theorem]{lemma}
\declaretheorem[name=Remark]{remark}
\declaretheorem[name=Corollary,refname=Cor.,sibling=theorem]{corollary}
\declaretheorem[name=Definition,refname=Def.]{definition}
\declaretheorem[name=Assumption,refname=Asm.]{assumption}
\crefname{appsec}{appendix}{appendices}
\DeclarePairedDelimiter\floor{\lfloor}{\rfloor}
\newcommand{\com}{\Theta}
\newcommand{\norm}[1]{\left\lVert#1\right\rVert}
\newcommand{\proj}{\ensuremath{\text{\rm Proj}}}
\author{Arya Akhavan\\
  Istituto Italiano di Tecnologia\\
  and\\
  CREST, ENSAE, IP Paris\\
  \texttt{aria.akhavanfoomani@iit.it}
  \And
  Massimiliano Pontil\\
  Istituto Italiano di Tecnologia\\
  and\\
  University College London\\
  \texttt{massimiliano.pontil@iit.it }
   \AND
 Alexandre B. Tsybakov \\
   CREST, ENSAE, IP Paris \\
   \texttt{alexandre.tsybakov@ensae.fr } \\
}
\begin{document}
\maketitle

\begin{abstract}%
We study the problem of distributed zero-order optimization for a class of strongly convex functions. They are formed by the average of local objectives, associated to different nodes in a prescribed network. 
We propose a distributed zero-order projected gradient descent algorithm to solve the problem. Exchange of information within the network is permitted only between neighbouring nodes. An important feature of our procedure is that it can query only function values, subject to a general noise model, that does not require zero mean or independent errors. 
We derive upper bounds for the average cumulative regret and optimization error of the algorithm  which highlight the role played by a network connectivity parameter, the number of variables, the noise level, the strong convexity parameter, and smoothness properties of the local objectives. The bounds indicate some key improvements of our method over the state-of-the-art, both in the distributed and standard zero-order optimization settings.
We also comment on lower bounds and observe that the dependency over certain function parameters in the bound is nearly optimal.
\end{abstract}

\section{Introduction}
We study the problem of distributed optimization where each node (or agent) has an objective function $f_{i}:\mathbb{R}^{d}\to \mathbb{R}$ and exchange of information is limited between neighbouring agents within a prescribed network of connections. The goal is to minimize the average of these objectives on a closed bounded convex set $\Theta \subset \mathbb{R}^d$,
\begin{equation}\label{model}
    \min_{x \in \Theta}f(x) \quad \text{where} \quad f(x)=\frac{1}{n}\sum_{i=1}^{n}f_{i}(x).
\end{equation}
Distributed optimization has been widely studied in the literature, we refer to  \cite{1104412,4749425,Ozdaglar2010,Boyd11,5936104,6705625,Lobel2010,Kia2015DistributedCO, shi,Jakoveti, Scaman, Pu} and references therein. This problem has broad applications such as multi-agent target seeking \cite{8078889}, distributed learning  \cite{Kraska2013MLbaseAD}, and wireless networks \cite{JS}, among others. 

{We address problem \eqref{model} from the perspective of zero-order distributed optimization. That is we assume that only function values can be queried by the algorithm, subject to measurement noise. During the optimization procedure, each agent maintains a local copy of the variables which are sequentially updated based on local and neighboring functions' queries.  We wish to devise such optimization procedures which are efficient in bounding the average optimization error and cumulative regret in terms of the functions' properties and network topology.} 




{
{\bf Contributions~} Our principal contribution is a distributed zero-order optimization algorithm, introduced in Section \ref{sec:2}, which we show to achieve tight rates of convergence under certain assumptions on the objective functions, outlined in Section \ref{sec2}. Specifically, we consider that the local objectives $f_{i}$ are $\beta$-Hölder and the average objective $f$ is $\alpha$-strongly convex. 
The algorithm relies on a novel zero-order gradient estimator, presented in Section \ref{sec:gradient-estimator}. Although conceptually very simple, this estimator, when employed 
within our algorithm, allows us to obtain an $O(d^2)$ computational gain as well as improved error rates than
previous state-of-the-art zero-order optimization procedures \cite{akhavan2020,BP2016}, in the special case of standard (undistributed) setting. 
{ Another key advantage} of our approach is due to the general noise model presented in Section \ref{sec:noise}, under which function values are queried. The noise variables do not need to be zero mean or independently sampled, and thus they include ``adversarial'' noise. 
In Section \ref{sec3}, we derive the rates of convergence for the cumulative regret and the optimization error of the proposed algorithm, and in Section \ref{sec6} we consider the special case of $2$-smooth functions. The rates highlight the dependency with respect to the number of variables $d$, the number of function queries $T$, the spectral gap of the network matrix $1-\rho$, and the parameters $n$, $\alpha$ and $\beta$. The bounds enjoy a better dependency on $1-\rho$ than previous bounds on zero-order distributed optimization \cite{xx, Yu2019,kk}. We also compare our bounds to related lower bounds in \cite{akhavan2020} for undistributed setting, observing that our rates are optimal either with respect to $T$ and $\alpha$, or with respect to $T$ and $d$.
}

{\bf Previous Work~}  {We briefly comment on previous related work and defer to Section \ref{sec:disc} for a more in depth discussion and comparison}. For both deterministic and stochastic scenarios of problem (\ref{model}), a large body of literature is devoted to first-order gradient based methods with a consensus scheme (see the papers cited above and references therein). On the other hand, the study of zero-order methods was started only recently \cite{xx,Soummya,Sahu,Hajinezhad2017,Yu2019,kk}. The works \cite{xx,Yu2019,kk} are dealing with zero-order distributed methods in noise-free settings while the noisy setting is developed in \cite{Hajinezhad2017,Soummya,Sahu}. Namely, \cite{Hajinezhad2017} considers 2-point zero-order methods with stochastic queries for non-convex optimization but assume that the noise is the same for both queries, which makes the problem analogous to noise-free scenario in terms of optimization rates. Papers \cite{Soummya,Sahu} study  zero-order distributed optimization for strongly convex and $\beta$-smooth functions $f_i$ with $\beta\in \{2,3\}$. They derive bounds on the optimization error, though without providing closed form expressions.

{\bf Notation~} Throughout we denote by $\langle \cdot, \cdot \rangle$ and $\|\cdot\|$ be the standard inner product and Euclidean norm on $\mathbb{R}^d$, respectively, and by $\|\cdot\|_*$ the spectral norm of a matrix. The notation 
$\mathbb{I}$ is used for the $n$-dimensional identity matrix and
$\mathbb{1}$ for the vector in $\mathbb{R}^n$ with all entries equal to 1. We denote by $e_j$ the $j$-th canonical basis vector in $\mathbb{R}^d$. {For any set $A$, the number of elements in $A$ is denoted by  $|A|$}. For $x\in \mathbb{R}$, the value $\floor{x}$ is the maximal integer less than $x$. {For every closed convex set $\com\subset \mathbb{R}^d$ and $x\in \mathbb{R}^d$ we denote by $\proj_\com(x) = {\rm argmin} \{ \|z-x\| : z \in \com\}$ the Euclidean projection of $x$ onto $\com$. We denote by ${\rm diam}(\com)$ the Euclidean diameter of $\com$. Finally} we let $U[-1,1]$ be the uniform distribution on $[-1,1]$.

\section{The Problem}
\label{sec:2}

Let $n$ be the number of agents and let $\mathcal{G} = (V , E)$ be an undirected graph, where $V = \{1,\dots,n\}$ is the set of nodes and $E \subseteq V \times V$ is the set of edges. The adjacency matrix of $\mathcal{G}$ is the symmetric matrix 
$(A_{ij})_{i,j=1}^n$ defined as $A_{ij} = 1$, if $(i,j) \in E$ and zero otherwise. We consider the following sequential learning framework, where each agent $i$ gets values of function $f_i$ corrupted by noise and shares information with other agents. 
At step $t$, agent $i$ acts as follows:
\begin{itemize}
    \item makes queries and gets noisy values of $f_i$,
    \vspace{-.05truecm}
    \item provides a local output $u^i(t)$ based on these queries and on the past information,
    \vspace{-.05truecm}
    \item  broadcasts $u^i(t)$ to neighboring agents,
    \vspace{-.05truecm}
    \item updates {its local variable} using information from other agents as follows: 
    $$x^{i}(t{+}1){=} \sum\limits_{j=1}^{n}{W}_{ij}u^j(t),
    $$
    where ${W}=({W}_{ij})_{i,j=1}^n$ is a given matrix called the consensus matrix.
\end{itemize}
Below we use the following condition on the consensus matrix.  
\begin{assumption}
\label{rho}
Matrix ${W}$ is symmetric, doubly stochastic, 
and $\rho: = \norm{{W}-n^{-1}\mathbb{1}\mathbb{1}^{\top}}_*<1$.
\end{assumption}
Matrix ${W}$ accounts for the connectivity properties of the network. If ${W}_{ij}= 0$ the agents $i$ and $j$ are not connected (do not exchange information). 
Often ${W}$ is defined as a doubly stochastic matrix function of the adjacency matrix $A$ of the graph. One popular example is as follows:
\[
{W}_{ij}=\left\{
	\begin{array}{ll}
		\frac{A_{ij}}{\gamma\max\{d(i),d(j)\}}  & \mbox{if $i$ $\neq$ $j$}, \\ \vspace{-.05truecm} \\
		1- \sum\limits_{k:k \ne i}\frac{A_{ki}}{\gamma\max\{d(i), d(k)\}}& \mbox{if $i=j$},
	\end{array}
\right.
\]
where $d(i)=\sum_{j=1}^n A_{ij}$ is the degree of node $i$ and $\gamma>0$ is a constant.
Then, clearly, ${W}=(W_{ij})$ is a symmetric and doubly stochastic matrix, and ${W}_{ij}= 0$ if agents $i$ and $j$ are not connected. Moreover, we have $\rho < 1 - {c}/{n^{2}}$ for a constant $c>0$  (see \cite{xx, Olshevsky2014LinearTA}). Values of spectral gaps $\rho$ for some other $W$ reflecting different network topologies can be found in \cite{5936104}. Typically, $\rho < 1 - a_n$, where $a_n= \Omega(n^{-1})$ or $a_n= \Omega(n^{-2})$. Parameter $\rho$ can be viewed as a measure of difference between the distributed problem 
and a standard optimization problem. If the graph of communication is a complete graph a natural choice is ${W}=n^{-1}\mathbb{1}_{n}\mathbb{1}_{n}^{\top}$ and then $\rho=0$. For more examples of consensus matrices ${W}$, see \cite{OT06,5936104} and references therein.

The local outputs $u^i$ can be defined in different ways.
Our approach is outlined in Algorithm \ref{algo}. 
At Step 1, an estimate of the gradient of the local objective $f_i$ at $x^i(t)$ is constructed. This involves a randomized procedure that we describe and justify in Section \ref{sec:gradient-estimator}. The local output $u^i$ is defined as an update of the projected gradient algorithm with such an estimated gradient. At Step 2 of the algorithm, each agent computes the next point by a local consensus gradient descent step, which uses local and neighbor information.
Step 2 of the algorithm is known as gossip method, see e.g., \cite{1638541}), which was initially introduced as an approach for the networks with the imposed connection between the nodes changing by time. We also refer to \cite{7944588} for similar algorithms in the context of distributed stochastic first-order gradient methods.
\begin{algorithm}[t!]
\caption{Distributed Zero-Order Gradient} \label{algo}
{
\begin{algorithmic}
\State 
\State {\bfseries Input} ~  Communication matrix $({W}_{ij})_{i,j=1}^n$, step sizes $(\eta_t>0)_{t=1}^{T_0-1}$
\State {\bfseries Initialization} ~ {Choose initial vectors $x^1(1)=\cdots =x^n(1) \in \mathbb{R}^d$}
\State {\bfseries For}  {$t=1,\dots, T_0-1$}
\State {\bfseries  \quad For} $i=1,\dots, n$
\State  \quad \quad {1.} ~~~Build an estimate $g^i(t)$ of the gradient $\nabla f_i(x^{i}(t))$ using noisy evaluations of $f_i$
\State\quad \quad {2.} ~~~Update $x^{i}(t{+}1){=} \sum_{k=1}^{n}{W}_{ik}\proj_{\com}(x^{k}(t)- \eta_t g^{k}(t))$
\State {\bfseries  \quad End}
\State {\bfseries End}
\State {\bfseries Output} ~Approximate minimizer $\bar{x}(T_0)=\frac{1}{n}\sum_{i=1}^{n}x^{i}(T_0)$ of the average objective $f=\frac{1}{n}\sum_{i=1}^{n}f_{i}$
\end{algorithmic}
}
\end{algorithm}



\section{Assumptions on Local Objectives}
\label{sec2}
In this section, we give some definitions and introduce our assumptions on the local objective functions $f_1,\dots,f_n$.

\begin{definition}\label{hh}
Denote by ${\cal F_\beta}(L)$ the set of all functions $f:\mathbb{R}^d\to \mathbb{R}$ that are $\ell=\lfloor \beta\rfloor$ times differentiable and satisfy, for all $x,z \in \mathbb{R}^{d}$ the H\"older-type condition
\begin{equation}
\bigg|f(z)-\sum_{0\le |m|\leq \ell} \frac{1}{m!}D^{m}f(x)(z-x)^{m} \bigg|\leq L \|z-x\|^{\beta},
\label{eq:Hclass}
\end{equation}
where $L >0$, the sum is over the multi-index $m=(m_{1},...,m_{d}) \in \mathbb{N}^d$, we used the notation $m!=m_{1}! \cdots m_{d}!$, $|m|=m_{1}+ \cdots+m_{d}$, and we defined, for every $\nu=(\nu_1,\dots, \nu_d) \in \mathbb{R}^{d}$, 
\[
D^{m}f(x)\nu^{m} = \frac{\partial ^{|m|}f(x)}{\partial ^{m_{1}}x_{1} \cdots\partial ^{m_{d}}x_{d}}\nu_{1}^{m_{1}} \cdots \nu_{d}^{m_{d}}. 
\]
Elements of the class ${\cal F_\beta}(L)$ are referred to as $\beta$-H\"older functions.
\end{definition}

\begin{definition}\label{deflip}
Function $f:\mathbb{R}^d\to \mathbb{R}$ is called 2-smooth if it is differentiable on $\mathbb{R}^d$ and there exists $\bar L>0$ such that, 
for every $(x,x') \in \mathbb{R}^d \times \mathbb{R}^d$, it holds that
\[
\|\nabla f(x)-\nabla f(x')\|\leq {\bar L} \|x-x'\|.
\]
\end{definition}

\begin{definition}\label{def:strongconv}
Let $\alpha>0$. Function $f:\mathbb{R}^{d}\to \mathbb{R}$ is called $\alpha$-strongly convex  if $f$ is differentiable on $\mathbb{R}^{d}$ and
\[
f(x)-f(x')\ge \langle \nabla f(x'), x-x'\rangle +\frac{\alpha}{2} \norm{x-x'}^{2}, ~  \forall x, x' \in \mathbb{R}^{d}.
\]
\end{definition}

\begin{assumption}\label{nat} Functions $f_1,\dots, f_n$: (i) belong to the class $\mathcal{F}_{\beta}(L)$, for some $\beta\geq 2$, 
and (ii) are 2-smooth. 
\end{assumption}
In Section \ref{sec3} we will analyse the convergence properties of Algorithm \ref{algo} when the objective function $f$ in \ref{model} is $\alpha$-strongly convex. We stress that we do not need the functions $f_{1},\dots,f_{n}$, to be as well  $\alpha$-strongly convex. It is enough to make such an assumption on the compound function $f$, while the local functions $f_{i}$ only need to satisfy the smoothness conditions stated in Assumption \ref{nat} above. 
\begin{algorithm}[t!]
\caption{Gradient Estimator with $2d$ Queries} \label{algo:grad}
{
\begin{algorithmic}
\State 
\State {\bfseries Input} ~  Function $F:\mathbb{R}^d \rightarrow \mathbb{R}$ and point $x \in \mathbb{R}^d$ 
\State {\bfseries Requires} Kernel $K :[-1,1]\rightarrow \mathbb{R}$, parameter $h >0$
\State {\bfseries Initialization} ~ Generate random $r$ from uniform distribution on $[-1,1]$
\State {\bfseries  For} $j=1,\dots, d$
\State  \quad \quad {1.} ~~~Obtain noisy values $y_{j} = F(x+h r e_j)+ \xi_{j}$ and 
$y'_j = F(x -h r e_j)+ \xi'_j$
\State  \quad \quad {2.} ~~~Compute ${g}_j= \frac{1}{2h} (y_{j}- y'_{j})  K(r)$
\State {\bfseries End}
\State {\bfseries Output} ~ ${g}=({g}_j)_{j=1}^d\in\mathbb{R}^d$ estimator of $\nabla F(x)$
\end{algorithmic}
}
\end{algorithm}
\section{Gradient Estimator}\label{sec:gradient-estimator}
In this section, we detail our choice of gradient estimators $g^i(t)$ used at Step 1 of Algorithm \ref{algo}. We consider 
Algorithm \ref{algo:grad}.
For any function $F:\mathbb{R}^d \rightarrow \mathbb{R}$ and any point $x$, the vector $g$ returned by
Algorithm \ref{algo:grad} is an estimate of $\nabla F(x)$ based on noisy observations of $F$ at randomized points. The estimator is computed for every node $i$ at each step $t$, thus giving the vectors $g=g^i(t)$ in Algorithm \ref{algo}. The gradient estimator crucially requires a kernel function $K:[-1,1]\to \mathbb{R}$ that allows us to take advantage of possible higher order smoothness properties of~$f$. Specifically, in what follows we assume that
\begin{equation}\label{ass:K}
\int u K(u) du =1,~\int u^j K(u) du =0, \ j=0,2,3,\dots, \ell,~\text{and}~
\kappa_\beta\equiv\int |u|^{\beta} |K(u)| du  <\infty,
\end{equation}
for given $\beta\ge 2$ and $\ell=\lfloor \beta\rfloor$. 
In \cite{PT90} such kernels can be constructed as weighted sums of Legendre polynomials, 
in which case $\kappa_\beta \le 2\sqrt{2}\beta$ with $\beta \geq 1$; see also Appendix~A.3 in \cite{BP2016} for a derivation.

The gradient estimator in Algorithm \ref{algo:grad} differs from the standard $2d$-point Kiefer-Wolfowitz type estimator in that it uses multiplication by a random variable $K(r)$ with a well-chosen kernel $K$. On the other hand, it is also different from the previous kernel-based estimators in zero-order optimization literature \cite{PT90,BP2016,akhavan2020} in that it needs $2d$ function queries per step, whereas those estimators require only one or two queries; see, in particular, Algorithm~1 in \cite{akhavan2020} for a comparison. At first sight, this seems a big drawback of the estimator proposed here, {however we will show below that thanks to this estimator we achieve both a more efficient optimization procedure and better rate of convergences for the optimization error.}

When the estimator in Algorithm \ref{algo:grad} is used at the $t$-th outer step of Algorithm \ref{algo}, it should be intended as a random variable that depends on the randomization used during the current estimation at the given node, as well as on the randomness of the past iterations, inducing the $\sigma$-algebra $\mathcal{F}_{t}$ (see Section \ref{sec:noise} for the definition). Bounds for the bias  of this estimator conditional on the past and for its second moment play an important role below, in our analysis of the convergence rates. These bounds are presented in the next two lemmas, whose proofs are presented in Appendix \ref{app:B}. We state them in the simpler setting of Algorithm \ref{algo:grad}, with no reference to the filtration $(\mathcal{F}_{t})_{t \in \mathbb{N}}$. 
\begin{restatable}{lemma}{lemonenew}\label{lem1new}
Let $f:\mathbb{R}^d\rightarrow \mathbb{R}$ be a function in ${\cal F_\beta}(L)$, $\beta\ge 2$, and let the random variables $\xi_1,{\dots},\xi_d$ and $\xi'_1,{\dots}, \xi'_d$ be independent of $r$ and satisfy $\mathbb{E} [|\xi_j|] < \infty$, $ \mathbb{E}[|\xi'_j|] < \infty$, for $j=1,{\dots},d$. Let the kernel satisfy conditions \eqref{ass:K}. If the gradient estimator $g$ of $f$ given by Algorithm \ref{algo:grad} then, for all $x \in \mathbb{R}^d$, \[
\| \mathbb{E}[ {g} ] - \nabla f(x)\| \leq L \kappa_\beta \sqrt{d} h ^{\beta-1}.
\]
\end{restatable}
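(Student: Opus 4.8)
The plan is to establish the bound one coordinate at a time and then recombine the $d$ coordinates via the Euclidean norm. Fix $j\in\{1,\dots,d\}$. Writing out the $j$-th component of the estimator gives $g_j=\frac{1}{2h}\big(f(x+hre_j)-f(x-hre_j)\big)K(r)+\frac{1}{2h}\big(\xi_j-\xi'_j\big)K(r)$, so that $\mathbb{E}[g_j]$ splits into a deterministic part and a noise part.

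\textbf{Eliminating the noise.} Since $\xi_j$ and $\xi'_j$ are independent of $r$, the noise contribution factorizes as $\mathbb{E}\big[(\xi_j-\xi'_j)K(r)\big]=\mathbb{E}[\xi_j-\xi'_j]\,\mathbb{E}[K(r)]$, and the zeroth-moment condition $\int u^0K(u)\,du=0$ in \eqref{ass:K} forces $\mathbb{E}[K(r)]=0$. Hence the noise disappears from the expectation with no zero-mean or independence hypothesis on the errors themselves: this is exactly the mechanism that accommodates adversarial noise, and it is the conceptual core of the statement. The finiteness assumptions $\mathbb{E}[|\xi_j|]<\infty$ and $\mathbb{E}[|\xi'_j|]<\infty$ are used only to make this factorization legitimate.

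\textbf{Reducing the deterministic part to a one-dimensional expansion.} It remains to compare $\mathbb{E}\big[\frac{1}{2h}(f(x+hre_j)-f(x-hre_j))K(r)\big]$ with $\partial_j f(x)$. I apply the H\"older inequality \eqref{eq:Hclass} defining $\mathcal F_\beta(L)$ at $z=x\pm hre_j$. Because $e_j$ has a single nonzero entry, the multi-index power $(hre_j)^m$ vanishes unless $m=pe_j$, so the multivariate Taylor sum collapses to the univariate polynomial $\sum_{p=0}^{\ell}\frac{1}{p!}\partial_j^p f(x)(hr)^p$, with remainder bounded by $L\|hre_j\|^\beta=Lh^\beta|r|^\beta$. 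In the symmetric difference the even-order terms cancel; multiplying by $K(r)$ and integrating, the vanishing odd moments $\int u^pK(u)\,du=0$ for odd $3\le p\le\ell$ kill the remaining higher-order terms, while the normalization $\int uK(u)\,du=1$ leaves precisely $\partial_j f(x)$. Thus the entire per-coordinate bias is carried by the two H\"older remainders $R_\pm$.

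\textbf{Bounding the remainder and recombining.} The leftover term is at most $\frac{1}{2h}\int\big(|R_+|+|R_-|\big)|K(u)|\,du\le\frac{1}{2h}\int 2L|hu|^\beta|K(u)|\,du=Lh^{\beta-1}\int|u|^\beta|K(u)|\,du=L\kappa_\beta h^{\beta-1}$, using the definition of $\kappa_\beta$ in \eqref{ass:K}. This gives the per-coordinate estimate $\big|\mathbb{E}[g_j]-\partial_j f(x)\big|\le L\kappa_\beta h^{\beta-1}$, and summing the squares over $j$ yields $\|\mathbb{E}[g]-\nabla f(x)\|=\big(\sum_{j=1}^d(\mathbb{E}[g_j]-\partial_j f(x))^2\big)^{1/2}\le\sqrt d\,L\kappa_\beta h^{\beta-1}$, which is the claim. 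I expect the main obstacle to be the bookkeeping in the second step: checking that the multi-index expansion genuinely reduces to a single-variable Taylor polynomial in the $j$-th coordinate, and matching the even/odd cancellations against the kernel moment conditions so that exactly the first-order term survives and reproduces $\partial_j f(x)$. Once that reduction is in place, the noise-elimination and remainder-bounding steps are short.
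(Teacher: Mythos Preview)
Your proof is correct and follows essentially the same route as the paper's: a univariate Taylor expansion along $e_j$, cancellation of higher-order odd terms via the kernel moment conditions, a remainder bounded through the H\"older condition, and a final $\sqrt{d}$ from summing the coordinates. The only difference is expository: you make explicit the noise-elimination step (factorization and $\int K=0$) and the collapse of the multi-index expansion to a single-variable one, both of which the paper's proof leaves implicit.
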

It is straightforward to see that the bound of Lemma \ref{lem1new} holds when the estimators are build recursively during the execution of Algorithm \ref{algo} and the expectation is taken conditionally on $\mathcal{F}_{t}$. This will be used in the proofs.
\begin{restatable}{lemma}{lemtwonew}\label{lem2new}
Let $f:\mathbb{R}^d\rightarrow \mathbb{R}$ be $2$-smooth and let $\max_{x \in \com}\norm{\nabla f(x)}\leq G$, $\kappa \equiv\int K^2(u)du<\infty$. 
Let the random variables $\xi_1,\dots,\xi_d$ and $\xi'_1,\dots, \xi'_d$ be independent of $r$ and $\mathbb{E} [\xi_j^2] \leq \sigma^2$, $ \mathbb{E}[(\xi'_j)^2] \leq \sigma^2$ for $j=1,\dots,d.$ 
{
If $g$ is defined by Algorithm \ref{algo:grad}, where $x$ is a random variable with values in $\com$ independent of $r$ and depending on $\xi_1,\dots,\xi_d$ and $\xi'_1,\dots, \xi'_d$  in an arbitrary way, then
}
 \[
\mathbb{E} \|{g}\|^2 \leq \frac{3d\kappa}{2} \left( \frac{\sigma^2}{h^2}+ \frac{3 {\bar L}^2}{4} h^2  \right)  + 9  G^2\kappa .
\]
\end{restatable}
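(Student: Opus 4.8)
The plan is to expand the estimator coordinatewise, isolate the gradient, the second-order Taylor remainder, and the noise, and then bound each of the three contributions separately. Writing out Step~2 of Algorithm~\ref{algo:grad}, each coordinate of the estimator reads
\[
g_j = \frac{K(r)}{2h}\bigl(f(x+hre_j)-f(x-hre_j)\bigr) + \frac{K(r)}{2h}(\xi_j-\xi_j').
\]
Since $f$ is $2$-smooth, the first-order Taylor inequality gives $\bigl|f(x\pm hre_j)-f(x)\mp hr\,\partial_j f(x)\bigr|\le \tfrac{\bar L}{2}h^2r^2$, so the function difference equals $2hr\,\partial_j f(x) + \Delta_j$ with $|\Delta_j|\le \bar L h^2 r^2$. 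Substituting, I would decompose
\[
g_j = r K(r)\,\partial_j f(x) \;+\; \frac{K(r)}{2h}\Delta_j \;+\; \frac{K(r)}{2h}(\xi_j-\xi_j'),
\]
and combine $\mathbb{E}\|g\|^2=\sum_{j=1}^d \mathbb{E}[g_j^2]$ with the elementary inequality $(a+b+c)^2\le 3(a^2+b^2+c^2)$.

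Next I would bound the three resulting expectations. The key structural fact is that $r$ is independent of $x$ and of $(\xi_j,\xi_j')$, which lets me factor the $r$-expectation out; together with $r^2\le 1$ on $[-1,1]$ this gives $\mathbb{E}[r^2K^2(r)],\ \mathbb{E}[r^4K^2(r)]\le \tfrac12\int K^2 = \tfrac{\kappa}{2}$. For the gradient term, $\sum_j \mathbb{E}[(rK(r)\partial_j f(x))^2]=\mathbb{E}[r^2K^2(r)]\,\mathbb{E}[\|\nabla f(x)\|^2]\le \tfrac{\kappa}{2}G^2$, using that $x\in\com$ almost surely and the uniform gradient bound $G$. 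For the remainder term I would bound $\Delta_j^2\le \bar L^2 h^4 r^4$ pointwise (valid for every $x$ since $2$-smoothness is global), giving $\sum_j\mathbb{E}\bigl[(K(r)\Delta_j/2h)^2\bigr]\le \tfrac{d\bar L^2 h^2}{4}\,\mathbb{E}[r^4K^2(r)]\le \tfrac{d\kappa\bar L^2 h^2}{8}$.

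For the noise term, the main subtlety is that $\xi_j$ and $\xi_j'$ are \emph{not} assumed independent (nor zero-mean), so no cross term cancels. Instead I would invoke Cauchy--Schwarz, $\mathbb{E}[\xi_j\xi_j']\le \sqrt{\mathbb{E}[\xi_j^2]\,\mathbb{E}[(\xi_j')^2]}\le \sigma^2$, whence $\mathbb{E}[(\xi_j-\xi_j')^2]\le 4\sigma^2$; factoring out the independent $r$ then yields $\sum_j\mathbb{E}\bigl[(K(r)(\xi_j-\xi_j')/2h)^2\bigr]\le \tfrac{d\kappa\sigma^2}{2h^2}$. Collecting the three contributions through the factor-$3$ inequality produces a bound of exactly the claimed form $\tfrac{3d\kappa}{2}\bigl(\tfrac{\sigma^2}{h^2}+\tfrac{3\bar L^2}{4}h^2\bigr)+9G^2\kappa$, indeed with room to spare in the $G^2$ and $\bar L^2$ terms.

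I expect the only genuinely delicate point to be the bookkeeping of dependence: one must use that $r$ is drawn independently of both the (arbitrary, possibly data-dependent) evaluation point $x$ and of the noise, so that every $r$-moment factors cleanly, while the $2$-smoothness remainder bound is uniform in $x$ and hence requires no independence. Everything else is a routine application of the quadratic inequality and Cauchy--Schwarz, so tracking the constants to reach the stated coefficients is mechanical rather than conceptual.
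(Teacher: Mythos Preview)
Your proposal is correct and follows essentially the same route as the paper: coordinatewise expansion, the inequality $(a+b+c)^2\le 3(a^2+b^2+c^2)$, the $2$-smoothness Taylor remainder, and factoring out the $r$-moments by independence. The only cosmetic difference is the grouping of the three terms: the paper takes $a=f(x+hre_j)-f(x-hre_j)$, $b=\xi_j$, $c=-\xi_j'$ (so no Cauchy--Schwarz is needed on the noise, and the squared function difference is then bounded via a cited lemma), whereas you split the function difference first into gradient plus remainder and keep $\xi_j-\xi_j'$ together; both arrive at the stated bound, yours with slightly smaller constants.
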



\section{Noise Model}
\label{sec:noise}
Algorithm \ref{algo:grad} is called to compute estimators of gradients of the local functions $f_i, i=1,\dots n,$ at each iteration $t$ of Algorithm \ref{algo}. Thus, we assume that agent $i$ at iteration $t$ generates a uniform random variable $r_i(t)\sim U[-1,1]$ and gets $2d$ noisy observations, defined, for $j=1,\dots, d$
\begin{eqnarray}
\nonumber
y_{i,j}(t) & = & f(x^i(t)+h_t r_i(t) e_j)+ \xi_{i,j}(t)\\ \nonumber
y_{i,j}'(t) & = & f(x^i(t)+h_t r_i(t) e_j)+ \xi_{i,j}'(t)
\end{eqnarray}
where the parameters $h_t>0$ will be specified later.

In what follows, we denote by $\mathcal{F}_{t}$ the $\sigma$-algebra {generated by the random variables $ x^{i}(t)$, for $i=1,\dots,n$.}
In order to meet the conditions of Lemmas \ref{lem1new} and \ref{lem2new} for each $(i,t)$, we impose the following assumption on the collection of random variables $(r_{i}(t), \xi_{i,j}(t),\xi_{i,j}'(t))$. 
\begin{assumption} 
\label{ass1}
For all integers $t$ and $i \in \{1,\dots,n\}$ the following properties hold.
\begin{itemize}   
\item[\text{(i)}]  The random variables $r_{i}(t)\sim U[-1,1]$ are independent of $\xi_{i,1}(t), \dots \xi_{i,d}(t)$, $\xi_{i,1}'(t), \dots, \xi_{i,d}'(t)$ {and from the $\sigma$-algebra $\mathcal{F}_{t}$,} 
\item[\text{(ii)}] $\mathbb{E}[(\xi_{i,j}(t))^2] \leq \sigma^2$, $\mathbb{E}[(\xi_{i,j}'(t))^2] \leq \sigma^2$ for $j=1,\dots, d$, and some $\sigma\ge 0$.
\end{itemize}
\end{assumption}
Assumption \ref{ass1} is very mild. Indeed, its part (i) occurs as a matter of course since it is unnatural to assume dependence between the random environment noise and artificial random variables $r_{i}(t)$ generated by the agents. We state (i) only for the purpose of formal rigor. Remarkably,  we do not assume the noises $\xi_{i,j}(t)$ and $\xi_{i,j}'(t)$ to have zero mean. What is more, these variables can be deterministic and no independence between them for different $i,j,t$ is required, so we consider an adversarial environment. Having such a relaxed  assumption on the noise is possible because of the multiplication by the zero-mean variable $K(r)$ in Algorithm \ref{algo:grad}. This and the fact that all components of the vectors are treated separately allows the proofs go through without the zero-mean assumption and under arbitrary dependence between the noises.

\section{Main Results }\label{sec3}

In this section, we provide upper bounds on the performance of the proposed algorithms. Recall that  $T_{0}$ is the number of outer iterations in Algorithm \ref{algo}. Let $T$ be the total number of times that we observed noisy values of each $f_i$.  
At each iteration of Algorithm \ref{algo:grad} we make $2d$ queries. Thus, to keep the total budget equal to $T$ we need to make $T_{0} = T/(2d)$ steps of Algorithm \ref{algo} (assuming that $T/(2d)$ is an integer). We compare our results to lower bounds for any algorithm with the total budget of $T$ queries.


For given $\beta\ge 2$, we choose the tuning parameters $\eta_{t}$ and $h=h_{t}$ in Algorithms \ref{algo} and \ref{algo:grad} as \begin{align}\label{paron}
    \eta_{t} = \frac{2}{\alpha t}, \quad \quad \text{   and  } \quad \quad h_{t}=t^{-\frac{1}{2\beta}}.
\end{align} 
Inspection of the proofs in Appendix \ref{app3} 
shows that these values of $\eta_{t}$ and $h_{t}$ lead to the best rates minimizing the bounds.  As one can expect, there are two contributions to the bounds, one representing the usual stochastic optimization error, while the second one accounts for the distributed character of the problem. This second contribution to the bounds is driven by the following quantity that we call   the mean discrepancy: 
$\Delta(t)\equiv n^{-1}\sum_{i=1}^{n}\mathbb{E}[\norm{x^{i}(t) - \bar{x}(t)}^2].$ 
It plays an important role in our argument and may be of interest by itself, cf. \cite{kk}. The next lemma gives a control of the mean discrepancy.
\begin{restatable}{lemma}{etrax}\label{garnier}
Let Assumptions \ref{rho}, \ref{nat}, and \ref{ass1} hold. Let $\com$ be a convex compact subset of $\mathbb{R}^d$. Assume that diam$(\com)\leq \mathcal{K}$ and $\max_{x \in \com}\norm{\nabla f(x)}\leq G$.
If the updates $x^{i}(t),\bar{x}(t)$ are defined by Algorithm \ref{algo},  in which the gradient estimators for $i$-th agent are defined by Algorithm \ref{algo:grad} with $F=f_i$, $i=1,\dots,n$, 
and parameters (\ref{paron})
then
\begin{align}\label{eq:garnier1}
    \Delta(t)\leq \mathcal{A} \left(\frac{\rho}{1-\rho}\right)^2\frac{d}{\alpha^{2}}t^{-\frac{2\beta-1}{\beta}},
\end{align}
where $\mathcal{A}$ is a constant independent of $t, d, \alpha, n, \rho$. The explicit value of $\mathcal{A}$ can be found in the proof.
\end{restatable}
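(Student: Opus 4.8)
The plan is to track the disagreement between each agent's iterate and the running average in matrix form, turn the consensus step into a $\rho$-contraction, and then control the accumulated gradient-estimation energy via the second-moment bound of Lemma~\ref{lem2new}. First I would collect the iterates into the $n\times d$ matrix $X(t)$ with $i$-th row $x^i(t)^\top$, and set $V(t)$ to have rows $\proj_\com(x^k(t)-\eta_t g^k(t))^\top$, so that Step~2 of Algorithm~\ref{algo} reads $X(t+1)=WV(t)$. Since $W$ is doubly stochastic the row average is preserved, $\bar x(t+1)=\frac1n\sum_k\proj_\com(x^k(t)-\eta_t g^k(t))$. Writing $M=W-n^{-1}\mathbb{1}\mathbb{1}^\top$ and using $\sum_k(W_{ik}-\tfrac1n)=0$, the centered matrix $D(t)$ with rows $(x^i(t)-\bar x(t))^\top$ satisfies $D(t+1)=M\,\widetilde V(t)$, where $\widetilde V(t)$ has rows $(\proj_\com(x^k(t)-\eta_t g^k(t))-\bar x(t))^\top$. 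Here I exploit that every $x^i(t)$, hence $\bar x(t)$, lies in $\com$, so $\proj_\com(\bar x(t))=\bar x(t)$, and non-expansiveness of the projection gives rowwise $\|\proj_\com(x^k(t)-\eta_t g^k(t))-\bar x(t)\|\le\|x^k(t)-\bar x(t)\|+\eta_t\|g^k(t)\|$.

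Next, Assumption~\ref{rho} gives $\|M\|_*=\rho$, hence $\|D(t+1)\|_F\le\rho\|\widetilde V(t)\|_F$, and the rowwise bound with the triangle inequality in $\ell^2$ yields the scalar recursion $\|D(t+1)\|_F\le\rho\big(\|D(t)\|_F+\eta_t\|G(t)\|_F\big)$, where $\|G(t)\|_F^2=\sum_k\|g^k(t)\|^2$. The initialization forces $D(1)=0$, so unrolling gives $\|D(t)\|_F\le\sum_{s=1}^{t-1}\rho^{t-s}\eta_s\|G(s)\|_F$. I would then square, apply Cauchy--Schwarz with the weights $\rho^{t-s}$ (total mass $\le\rho/(1-\rho)$), and take expectations. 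Conditioning on $\mathcal F_s$ makes $x^k(s)$ measurable and $r_k(s)$ independent, so Lemma~\ref{lem2new} applies to each $g^k(s)$ and, with $\eta_s=2/(\alpha s)$ and $h_s=s^{-1/(2\beta)}$, delivers $\mathbb{E}\|g^k(s)\|^2\le c_1 d\,s^{1/\beta}$ for a constant $c_1$ depending only on $\kappa,\sigma,\bar L,G$. Dividing by $n$ this produces $\Delta(t)\le\frac{4c_1 d}{\alpha^2}\,\frac{\rho}{1-\rho}\sum_{s=1}^{t-1}\rho^{t-s}s^{1/\beta-2}$.

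The remaining and, I expect, only delicate step is to estimate the convolution of the geometric kernel against the polynomially decaying sequence $s^{1/\beta-2}$. The sum concentrates on indices $s$ close to $t$, where $\rho^{t-s}\approx1$ and $s^{1/\beta-2}\approx t^{1/\beta-2}$: splitting at $s=t/2$, bounding $s^{1/\beta-2}\le(t/2)^{1/\beta-2}$ on the upper half and summing the geometric weights to $\rho/(1-\rho)$, while controlling the exponentially small lower half (whose prefactor $\sum_s s^{1/\beta-2}$ converges since $1/\beta-2<-1$), should give $\sum_{s=1}^{t-1}\rho^{t-s}s^{1/\beta-2}\le C\,\frac{\rho}{1-\rho}\,t^{1/\beta-2}$. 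Substituting this yields the second factor $\rho/(1-\rho)$ and the rate $t^{-(2\beta-1)/\beta}=t^{1/\beta-2}$, and reading off all constants gives the explicit $\mathcal A$. The care in this convolution is genuine, since a naive application of the geometric-sum bound (replacing $s^{1/\beta-2}$ by $1$) loses the polynomial decay in $t$ altogether; the interplay between the exponential kernel and the transient regime $t\lesssim(1-\rho)^{-1}$ must be handled so that the final constant stays independent of $\rho$ and $t$.
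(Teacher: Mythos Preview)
Your route to the convolution is valid and runs essentially parallel to the paper's, with a somewhat cleaner organization. You track the unsquared Frobenius norm $\|D(t)\|_F$, use non-expansiveness of the projection (legitimately, since every $x^i(t)$, hence $\bar x(t)$, lies in $\com$) together with Minkowski to obtain the linear recursion $\|D(t+1)\|_F\le\rho(\|D(t)\|_F+\eta_t\|G(t)\|_F)$, unroll, and square via Cauchy--Schwarz. The paper instead works with the squared quantity $V(t)=\|D(t)\|_F^2$ from the outset, expands, and uses Young-type decoupling with a free parameter $\lambda$ (optimized to $\lambda=(1-\rho)/(2\rho)$) to derive a recursion directly on $\Delta(t)=\mathbb{E}[V(t)]/n$. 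Both arrive at the same structure $\Delta(t)\lesssim\frac{d}{\alpha^2}\cdot\frac{\rho}{1-\rho}\sum_{s<t}\rho^{t-s}s^{1/\beta-2}$.

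Where your sketch falls short is the lower half of the convolution. The naive estimate you propose, $\sum_{s\le t/2}\rho^{t-s}s^{1/\beta-2}\le\rho^{t/2}\sum_{s\ge1}s^{1/\beta-2}=C_\beta\,\rho^{t/2}$, does \emph{not} deliver the claimed bound with a constant independent of $\rho$: the needed inequality $C_\beta\,\rho^{t/2}\le\frac{C\rho}{1-\rho}\,t^{1/\beta-2}$ is equivalent to $(1-\rho)\rho^{t/2-1}t^{\,2-1/\beta}\le C/C_\beta$, and for $\beta\ge2$ the supremum of the left side over $t\ge1$ behaves like $(1-\rho)^{1/\beta-1}\to\infty$ as $\rho\to1$. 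Your final paragraph correctly flags exactly this difficulty but does not resolve it. The paper treats the lower half differently: it applies Chebyshev's correlation inequality for monotone sequences to each half separately, which extracts an \emph{additional} factor $1/\lfloor t/2\rfloor$ on the lower half, and then controls the remaining geometric tail $\sum_{k\ge\lfloor t/2\rfloor}\rho^k$ via a pointwise bound of the type $\rho^k\lesssim 1/(k^2\log(1/\rho))$. That extra $1/t$ gained through Chebyshev is precisely the device missing from your sketch; without it the lower-half contribution cannot be absorbed into $\frac{\rho}{1-\rho}t^{1/\beta-2}$ uniformly in $\rho$.
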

\begin{proof}[Proof Sketch]
Let $V(t)\!=\!\sum_{i=1}^{n}\norm{x^{i}(t) \!-\! \bar{x}(t)}^2$, and $z^{i}(t)\!=\!\proj_{\com}\big(x^{i}(t)\!-\! \eta_t g^{i}(t)\big)\!-\!(x^{i}(t)- \eta_t g^{i}(t)).$ The first step is to show that, due to the definition of the algorithm and Assumptions \ref{rho} on matrix $W$, we have
\begin{align}
     V(t+1) \leq \rho^{2}\sum_{i=1}^{n}\norm{x^{i}(t)-\bar{x}(t)-\eta_{t}(g^{i}(t)-\bar{g}(t))+z^{i}(t)-\bar{z}(t)}^{2}\label{cb1},
\end{align}
where $\bar{g}(t)$ and $\bar{z}(t)$ denote the averages of $g^i(t)$'s and $z^i(t)$'s over the agents $i$. From \eqref{cb1}, by using the fact that $\|z^{i}(t)\|\le \eta_t\|g^i(t)\|$, applying Lemma \ref{lem1new}  conditionally on $\mathcal{F}_t$, taking expectations and then applying Lemma \ref{lem2new} we deduce the recursion  
\[
    \Delta(t+1) \leq \rho\Delta(t) + \mathcal{A}_{1} \frac{\rho^{2}}{1-\rho}\cdot \frac{d}{\alpha^{2}}t^{-\frac{2\beta-1}{\beta}},
\]
where $\mathcal{A}_{1}>0$ is a constant. The initialization of Algorithm \ref{algo} is chosen so that $\Delta(1)=0$. It follows that $\Delta(t)$ is bounded by a discrete convolution that can be carefully evaluated leading to \eqref{eq:garnier1}.
\end{proof}
Using Lemma \ref{garnier} we obtain the following theorem.
\begin{restatable}{theorem}{onthe}\label{ert}
Let $f$ be an $\alpha$-strongly convex function and let the assumptions of Lemma \ref{garnier} be satisfied. 
Then for any $x \in \com$ the cumulative regret satisfies
\begin{align*}
  \sum_{t=1}^{T_{0}}\mathbb{E}\big[f(\bar{x}(t))-f(x)\big] \leq \frac{d}{\alpha}T_{0}^{\frac{1}{\beta}}\left(\mathcal{B}_{1}+\frac{\mathcal{B}_{2} \rho^2}{1-\rho}\right)+\frac{\mathcal{B}_{3}}{\alpha(1-\rho)}(\log(T_{0})+1),
\end{align*}
where  the positive constants $\mathcal{B}_{i}$ are independent of $T_0, d, \alpha, n, \rho$. The explicit values of these constants can be found in the proof. Furthermore, if $x^*$ is the minimizer of $f$ over $\com$ the optimization error of the averaged estimator $\hat{x}(T_{0})=\frac{1}{T_{0}}\sum_{t=1}^{T_{0}}\bar{x}(t)$ satisfies
\begin{align}\label{main-opt-error}
  \mathbb{E}[f(\hat{x}(T_{0}))-f(x^{*})]&\leq
   \frac{d}{\alpha}T_{0}^{-\frac{\beta-1}{\beta}}
   \left(\mathcal{B}_{1}+\frac{\mathcal{B}_{2} \rho^2}{1-\rho}\right)
   +\frac{\mathcal{B}_{3}}{\alpha(1-\rho)}\Big(\frac{\log(T_{0})+1}{T_{0}}\Big).
\end{align}
\end{restatable}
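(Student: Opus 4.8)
The plan is to analyze the standard projected-gradient-descent recursion for the averaged iterate $\bar x(t)$, then convert a cumulative-regret bound into the optimization-error bound by convexity. Because $W$ is doubly stochastic, averaging the update in Step 2 of Algorithm \ref{algo} over the agents yields $\bar x(t{+}1) = \frac1n\sum_{k=1}^n \proj_\com(x^k(t)-\eta_t g^k(t))$, so $\bar x(t{+}1)$ is an average of projected points. First I would control a single step by expanding $\|\bar x(t{+}1)-x\|^2$. The main technical nuisance is that $\bar x(t{+}1)$ is the \emph{average of projections} rather than the projection of the average, so I would use the nonexpansiveness of $\proj_\com$ together with the convexity inequality $\|\frac1n\sum_k a_k\|^2 \le \frac1n\sum_k\|a_k\|^2$ to bound $\|\bar x(t{+}1)-x\|^2$ by $\frac1n\sum_k \|x^k(t)-\eta_t g^k(t)-x\|^2$. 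Expanding the square produces a cross term $-2\eta_t\langle \bar g(t),\bar x(t)-x\rangle$ plus a deviation term coming from the gap between $x^k(t)$ and $\bar x(t)$, which is exactly where the mean discrepancy $\Delta(t)$ from Lemma \ref{garnier} enters.

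\medskip

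Next I would handle the inner-product term. Decompose $g^k(t) = \mathbb{E}[g^k(t)\mid\mathcal F_t] + (g^k(t)-\mathbb{E}[g^k(t)\mid\mathcal F_t])$ and write the conditional expectation as $\nabla f(x^k(t))$ plus a bias vector of norm at most $L\kappa_\beta\sqrt d\,h_t^{\beta-1}$ by Lemma \ref{lem1new}. The exact-gradient part $\langle \nabla f(\bar x(t)),\bar x(t)-x\rangle$ is lower bounded using $\alpha$-strong convexity (Definition \ref{def:strongconv}), giving $f(\bar x(t))-f(x)+\frac{\alpha}{2}\|\bar x(t)-x\|^2$; the discrepancy between $\nabla f(x^k(t))$ evaluated at the local points and $\nabla f(\bar x(t))$ is absorbed via $2$-smoothness (Definition \ref{deflip}) and Cauchy--Schwarz, contributing a term proportional to $\Delta(t)^{1/2}$ (or $\Delta(t)$ after Young's inequality). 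Taking expectations, using $\mathbb{E}\|g^k(t)\|^2$ from Lemma \ref{lem2new} to bound the $\eta_t^2\mathbb{E}\|\bar g(t)\|^2$ remainder, and collecting terms yields a one-step inequality of the form
\begin{equation}\label{eq:onestep-plan}
\mathbb{E}[f(\bar x(t))-f(x)] \le \frac{1}{2\eta_t}\Big(\mathbb{E}\|\bar x(t)-x\|^2 - \mathbb{E}\|\bar x(t{+}1)-x\|^2\Big) - \frac{\alpha}{2}\mathbb{E}\|\bar x(t)-x\|^2 + R(t),
\end{equation}
where $R(t)$ gathers the bias, variance, and discrepancy contributions.

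\medskip

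Then I would sum \eqref{eq:onestep-plan} over $t=1,\dots,T_0$. With the choice $\eta_t = 2/(\alpha t)$ the telescoping works cleanly: the coefficient $\frac{1}{2\eta_t}-\frac{\alpha}{2} = \frac{\alpha(t-1)}{2}$ matches $\frac{1}{2\eta_{t-1}}=\frac{\alpha(t-1)}{2}$, so the quadratic terms collapse and the weighted distances cancel across consecutive steps. What remains is $\sum_t R(t)$. Plugging $h_t = t^{-1/(2\beta)}$ makes the squared bias behave like $d\,h_t^{2(\beta-1)} = d\,t^{-(\beta-1)/\beta}$ and the variance term like $d\sigma^2/h_t^2 = d\,t^{1/\beta}$, and multiplying by $\eta_t \sim 1/(\alpha t)$ gives summands of order $\frac{d}{\alpha}t^{1/\beta - 1}$; summing over $t\le T_0$ produces the leading $\frac{d}{\alpha}T_0^{1/\beta}$ term. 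The discrepancy bound \eqref{eq:garnier1} contributes $\Delta(t)\lesssim \big(\tfrac{\rho}{1-\rho}\big)^2 \tfrac{d}{\alpha^2} t^{-(2\beta-1)/\beta}$, which after weighting yields the $\frac{\rho^2}{1-\rho}$ factor, and the residual $G^2\kappa$ and constant-order pieces of $R(t)$ sum to the logarithmic term $\frac{\mathcal B_3}{\alpha(1-\rho)}(\log T_0+1)$. This establishes the cumulative-regret bound.

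\medskip

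Finally, the optimization-error bound follows immediately: since $f$ is convex, Jensen's inequality gives $f(\hat x(T_0)) \le \frac1{T_0}\sum_{t=1}^{T_0} f(\bar x(t))$, so $\mathbb{E}[f(\hat x(T_0))-f(x^*)] \le \frac1{T_0}\sum_{t=1}^{T_0}\mathbb{E}[f(\bar x(t))-f(x^*)]$, and dividing the cumulative-regret bound (taken at $x=x^*$) by $T_0$ yields \eqref{main-opt-error}. The step I expect to be the main obstacle is the careful bookkeeping in \eqref{eq:onestep-plan}: keeping track of how the local-versus-average deviations propagate through both the strong-convexity lower bound and the smoothness estimate, and choosing the Young's-inequality split so that the $\Delta(t)$ terms are controlled by Lemma \ref{garnier} without spoiling the cancellation that the choice $\eta_t=2/(\alpha t)$ provides. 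Getting the constants to be genuinely independent of $T_0,d,\alpha,n,\rho$ as claimed requires verifying that every $\rho$- and $n$-dependence is funneled exactly into the advertised $\frac{\rho^2}{1-\rho}$ and $\frac{1}{1-\rho}$ factors.
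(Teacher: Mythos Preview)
Your global architecture is right---one-step recursion for $a_t=\|\bar x(t)-x\|^2$, strong convexity to extract $f(\bar x(t))-f(x)$, Lemmas~\ref{lem1new}--\ref{lem2new} for bias and second moment, Lemma~\ref{garnier} for the discrepancy, telescoping with $\eta_t=2/(\alpha t)$, then Jensen for \eqref{main-opt-error}. The gap is in how you handle the projection, and it is exactly the step that determines the $\rho$-dependence announced in the theorem.

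Your bound $\|\bar x(t{+}1)-x\|^2\le \frac1n\sum_k\|x^k(t)-\eta_t g^k(t)-x\|^2$ (Jensen plus nonexpansiveness) is valid, but when you expand the right-hand side you pick up the raw term
\[
\frac{1}{2\eta_t}\cdot\frac1n\sum_{k=1}^n\|x^k(t)-\bar x(t)\|^2,
\]
whose coefficient is $\tfrac{1}{2\eta_t}=\tfrac{\alpha t}{4}$ with \emph{no} factor of $(1-\rho)$. Plugging in \eqref{eq:garnier1}, $\Delta(t)\lesssim\big(\tfrac{\rho}{1-\rho}\big)^2\frac{d}{\alpha^2}t^{-(2\beta-1)/\beta}$, and summing gives a contribution of order $\tfrac{\rho^2}{(1-\rho)^2}\tfrac{d}{\alpha}T_0^{1/\beta}$, i.e.\ $(1-\rho)^{-2}$, not the $(1-\rho)^{-1}$ in the statement. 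Your sentence ``after weighting yields the $\frac{\rho^2}{1-\rho}$ factor'' is therefore not justified by the decomposition you propose; no choice of Young parameter elsewhere removes this raw $\frac{1}{2\eta_t}\Delta(t)$ term, because it comes directly from the bias--variance identity $\frac1n\sum_k\|x^k-x\|^2=\|\bar x-x\|^2+\frac1n\sum_k\|x^k-\bar x\|^2$.

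The paper avoids this by \emph{not} passing through Jensen. It writes the averaged update additively as $\bar x(t{+}1)=\bar x(t)-\eta_t\bar g(t)+\bar z(t)$ with $z^i(t)=\proj_\com(x^i(t)-\eta_t g^i(t))-(x^i(t)-\eta_t g^i(t))$, expands $\|\bar x(t{+}1)-x\|^2$ directly, and isolates the extra term $\frac{1}{\eta_t}\langle \bar z(t),\bar x(t)-x\rangle$ (``Bias2''). The projection variational inequality $\langle z^i(t),\,x^i(t)-\eta_t g^i(t)-x\rangle\le 0$ kills the dangerous part of this inner product, leaving only $\langle z^i(t),\,\bar x(t)-x^i(t)+\eta_t g^i(t)\rangle$; since $\|z^i(t)\|\le\eta_t\|g^i(t)\|$, a Young split with parameter $(1-\rho)$ then produces $\tfrac{1-\rho}{2n\eta_t}\sum_i\|x^i(t)-\bar x(t)\|^2$ rather than $\tfrac{1}{2n\eta_t}\sum_i\|x^i(t)-\bar x(t)\|^2$. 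That extra $(1-\rho)$ cancels one power of $(1-\rho)^{-1}$ from Lemma~\ref{garnier} and is precisely what yields the advertised $\frac{\rho^2}{1-\rho}$. The same Young split is used in the smoothness part of ``Bias1''. If you want to keep your approach you will prove a theorem with $(1-\rho)^{-2}$, which matches prior work but not the statement you are asked to establish.
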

\begin{proof}
[Proof sketch] Note first that, due to the definition of Algorithm \ref{algo} and to the properties of matrix~$W$ we have $\bar{x}(t+1)=\bar{x}(t) -\eta_t \bar g(t) + \bar z(t)$. This resembles the usual recursion of the gradient algorithm with an additional term $\bar z(t)=n^{-1}\sum_{i=1}^n z^i(t)$, where $\|z^{i}(t)\|\le \eta_t\|g^i(t)\|$. Using this bound and  $\alpha$-strong convexity of $f$, analyzing the recursion in the standard way and taking conditional expectations we obtain that, for any $x\in \com$,  
\begin{align}\nonumber
    &f(\bar{x}(t))-f(x) \leq 
     \frac{1}{2\eta_{t}}\mathbb{E}\big[a_{t} - a_{t+1}| \mathcal{F}_{t}\big] -\frac{\alpha a_t}{2}+\frac{2\eta_{t}}{n}\sum_{i=1}^{n}\mathbb{E}\big[\norm{{g}^i(t)}^{2}|\mathcal{F}_{t}\big] 
     \\
    &\quad \quad \quad 
     + \underbrace{\norm{\mathbb{E}\big[\bar{g}(t)| \mathcal{F}_{t}\big] - \nabla f(\bar{x}(t))}\norm{\bar{x}(t)-x}}_{\rm Bias1} +\underbrace{\frac{1}{\eta_{t}}\mathbb{E}\big[\langle\bar{z}(t),\bar{x}(t)-x\rangle|\mathcal{F}_{t}\big]}_{\rm Bias2} , \label{plkj}
   \end{align}
where $a_{t} = \norm{\bar{x}(t)-x}^2$. Here, the term {\rm Bias2} is entirely induced by the distributed nature of the problem.  Using the properties of Euclidean projection and some algebra, it can be bounded as 
\begin{align} \label{plkj-1}
    {\rm Bias2} &\le \frac{3\eta_{t}}{2(1-\rho)n}\sum_{i=1}^{n}\mathbb{E}\big[\norm{{g}^i(t)}^{2}|\mathcal{F}_{t}\big] + \frac{1-\rho}{2n\eta_{t}}\sum_{i=1}^{n}\norm{{x}^i(t)-\bar{x}(t)}^{2}.
        \end{align}
On the other hand, {\rm Bias1} accumulates two contributions, the first due to the gradient approximation (cf. Lemma  \ref{lem1new}) and the second due to the  distributed nature of the problem:
\begin{align} {\rm Bias1} &\leq \nonumber
    \kappa_{\beta}L\sqrt{d}h_{t}^{\beta-1}\norm{\bar{x}(t)-x}+\frac{\bar{L}}{n}\sum_{i=1}^{n}\norm{x^{i}(t)-\bar{x}(t)}\norm{\bar{x}(t)-x}
    \\
    & \label{plkj-2}
    \le \Big(\frac{(\kappa_{\beta}L)^{2}}{\alpha}dh_{t}^{2(\beta-1)}+\frac{\alpha a_t}{4}\Big)+\left(\frac{\bar{L}t\alpha(1-\rho)}{n}\sum_{i=1}^{n}\norm{x^{i}(t)-\bar{x}}^{2} + \frac{\bar{L}\mathcal{K}^{2}}{4 t\alpha(1-\rho) }\right).
 \end{align}
Next, we combine  inequalities \eqref{plkj}--\eqref{plkj-2}, take expectations of both sides of the resulting inequality, and use Lemmas \ref{lem2new} and \ref{garnier} to bound the second moments $\mathbb{E}\big[\norm{{g}^i(t)}^{2}\big]$ and  the mean discrepancy. The final result is obtained by summing up from $t=1$ to $t=T_{0}$ and recalling that $\eta_{t} = \frac{2}{\alpha t}$,  $h_{t}=t^{-\frac{1}{2\beta}}$.  
\end{proof}
Due to $\alpha$-strong convexity of $f$, Theorem  \ref{ert} immediately implies a bound on the estimation error  $\mathbb{E}[\|\hat x(T_0)-x^*\|^2]$. The bound is of the order of the right-hand side of \eqref{main-opt-error} divided by $\alpha$. Furthermore, 
we 
get the following result about local estimators, which follows from a slight modification of Lemma \ref{garnier} and Theorem  \ref{ert}.

\begin{restatable}{corollary}{local}\label{cor:local-agent}
	Let Assumptions \ref{rho},~\ref{nat}, and \ref{ass1} hold. Let $\com$ be a convex compact subset of $\mathbb{R}^d$. Assume that diam$(\com)\leq \mathcal{K}$ and $\max_{x \in \com}\norm{\nabla f(x)}\leq G$.
	If the updates $x^{i}(t)$ are defined by Algorithm \ref{algo},  in which the gradient estimators for $i$-th agent are defined by Algorithm \ref{algo:grad} with $F=f_i$, $i=1,\dots,n$, 
	and parameters $\eta_{t} = \frac{4}{\alpha (t+1)},  h_{t}=t^{-\frac{1}{2\beta}}$
	then
 the local average estimator $\hat x^i(T_0)=\frac{2}{T_{0}(T_0+1)}\sum_{t=1}^{T_{0}}t{x}^i(t)$  
 satisfies
\begin{align*}
  \mathbb{E}[\|\hat x^i(T_0)-x^*\|^2]
  &\leq \mathcal{C} \min\left\{ 1,
   \frac{d}{\alpha^2(1-\rho)}
    T_0^{-\frac{\beta-1}{\beta}}\left(
   1
  + \,\frac{n\rho^2}{(1-\rho)T_0}
   \right)
   \right\}, \quad i=1,\dots,n,
\end{align*}
where $\mathcal{C}>0$ is a positive constant independent of $T_0, d, \alpha, n, \rho$. 
\end{restatable}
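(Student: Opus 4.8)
The plan is to bound the error of the local estimator by comparing it with the \emph{global} weighted average and then controlling the two resulting pieces separately, reusing the machinery of Lemma~\ref{garnier} and Theorem~\ref{ert} but adapted to the weights $w_t = 2t/(T_0(T_0+1))$ and the step sizes $\eta_t = 4/(\alpha(t+1))$. First I would observe that Lemma~\ref{garnier} goes through for these step sizes: its recursion $\Delta(t+1)\le \rho\,\Delta(t) + \mathcal{A}_1\frac{\rho^2}{1-\rho}\frac{d}{\alpha^2}t^{-(2\beta-1)/\beta}$ only uses $\eta_t=\Theta(1/(\alpha t))$ together with Lemmas~\ref{lem1new}--\ref{lem2new}, so the same convolution estimate yields $\Delta(t)\le \mathcal{A}'\big(\rho/(1-\rho)\big)^2(d/\alpha^2)\,t^{-(2\beta-1)/\beta}$ with a new constant $\mathcal{A}'$.

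Next I would split, via the triangle inequality,
\[
\norm{\hat x^i(T_0)-x^*}^2 \le 2\,\norm{\hat x^i(T_0)-\widehat{\bar x}(T_0)}^2 + 2\,\norm{\widehat{\bar x}(T_0)-x^*}^2,
\]
where $\widehat{\bar x}(T_0)=\frac{2}{T_0(T_0+1)}\sum_{t=1}^{T_0} t\,\bar x(t)$ is the global weighted average. For the first term, Jensen's inequality on the convex combination with weights $w_t$ gives $\norm{\hat x^i(T_0)-\widehat{\bar x}(T_0)}^2\le \sum_t w_t\,\norm{x^i(t)-\bar x(t)}^2$. Bounding the \emph{single-agent} discrepancy by the full sum, $\mathbb{E}\,\norm{x^i(t)-\bar x(t)}^2\le n\,\Delta(t)$, then inserting the modified Lemma~\ref{garnier} and using $\sum_t t\cdot t^{-(2\beta-1)/\beta}=O(T_0^{1/\beta})$, produces exactly the contribution $\tfrac{n\rho^2}{(1-\rho)^2}\tfrac{d}{\alpha^2}T_0^{-(2\beta-1)/\beta}$. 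This is precisely the source of the factor $n\rho^2/((1-\rho)T_0)$ in the stated bound, the only genuinely new ingredient relative to Theorem~\ref{ert}.

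For the global term I would redo the argument of Theorem~\ref{ert} with $t$-weights. Starting from \eqref{plkj}--\eqref{plkj-2}, the effective strong-convexity coefficient after absorbing Bias1 is $-\alpha a_t/4$ with $a_t=\norm{\bar x(t)-x^*}^2$; multiplying by $t$ and summing, the choice $\eta_t=4/(\alpha(t+1))$ is exactly the one for which $\frac{t}{2\eta_t}-\frac{t-1}{2\eta_{t-1}}=\frac{\alpha(t+1)t}{8}-\frac{\alpha t(t-1)}{8}=\frac{\alpha t}{4}$, so the diagonal $a_t$ terms telescope to zero and the boundary term $-\frac{T_0}{2\eta_{T_0}}a_{T_0+1}\le 0$ is dropped. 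What remains is $\sum_t t\,\mathbb{E}[\tilde R_t]$, where $\tilde R_t$ collects the gradient second moment ($O(d\,t^{1/\beta})$ via Lemma~\ref{lem2new} with $h_t=t^{-1/(2\beta)}$), the $O(d\,t^{-(\beta-1)/\beta})$ approximation bias, and the discrepancy terms (via the modified Lemma~\ref{garnier}). Applying Jensen to pass to $f(\widehat{\bar x}(T_0))$, then $\alpha$-strong convexity to convert the value gap into $\tfrac{\alpha}{2}\mathbb{E}\,\norm{\widehat{\bar x}(T_0)-x^*}^2$, and dividing by $\alpha T_0(T_0+1)/4\asymp \alpha T_0^2$, yields the leading term $\tfrac{d}{\alpha^2(1-\rho)}T_0^{-(\beta-1)/\beta}$ (the discrepancy contributions carry a harmless extra $\rho^2\le 1$ and are absorbed).

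Combining the two pieces and factoring out $\tfrac{d}{\alpha^2(1-\rho)}T_0^{-(\beta-1)/\beta}$ gives the bracketed expression $1+\tfrac{n\rho^2}{(1-\rho)T_0}$; since $\hat x^i(T_0),x^*\in\com$ with $\mathrm{diam}(\com)\le\mathcal{K}$ the error never exceeds a constant, which supplies the $\min\{1,\cdot\}$ after enlarging $\mathcal{C}$. The main obstacle is the third step: checking that the weighted telescoping closes \emph{exactly} under the modified step size and tracking every residual term in $\sum_t t\,\mathbb{E}[\tilde R_t]$ at the correct powers of $T_0$, $d$, $\alpha$ and $1-\rho$. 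By contrast, the local-to-global passage of the second step, which carries the factor $n$, is a short Jensen argument built on the already-established discrepancy bound.
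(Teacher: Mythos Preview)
Your proposal is correct and follows essentially the same route as the paper's own proof: adapt Lemma~\ref{garnier} to the step size $\eta_t=4/(\alpha(t+1))$, split $\hat x^i(T_0)-x^*$ via the global weighted average $\hat x_\star(T_0)=\frac{2}{T_0(T_0+1)}\sum_t t\,\bar x(t)$, control the local-to-global piece by Jensen plus $\mathbb{E}\|x^i(t)-\bar x(t)\|^2\le n\Delta(t)$, and control the global piece by multiplying \eqref{eq:final} by $t$, summing, and using the telescoping identity $\sum_t\big(\tfrac{t(r_t-r_{t+1})}{2\eta_t}-\tfrac{\alpha}{4}tr_t\big)\le 0$ followed by strong convexity. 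Your identification of the telescoping as the reason for the specific choice of $\eta_t$ is exactly the paper's mechanism.
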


We now state a corollary of Theorem \ref{ert} for an algorithm with total budget of $T$ queries. Assume that $T_0=T/(2d)$ is an integer. 
As our algorithm makes $2d$ queries per step the estimator  ${\hat x}(T/(2d))$ uses the total budget of $T$ queries. Combining Theorem \ref{ert} with the trivial bound $ \mathbb{E}[f(\hat x(T/(2d))-f(x^{*})]\le G\mathcal{K}$ we get the following result.

\begin{corollary}\label{cor:main}
Let $T\ge 2d$ and let the assumptions of Theorem  \ref{ert}  be satisfied. Then 
 we have
\begin{align*}
  \mathbb{E}[f(\hat x(T/(2d))-f(x^{*})]
  &\leq \mathcal{C} \min\left\{ 1,
   \frac{d^{2-1/\beta}}{\alpha(1-\rho)}
    T^{-\frac{\beta-1}{\beta}}
   %
   \right\},
\end{align*}
where $\mathcal{C}>0$ is a positive constant independent of $T, d, \alpha, n, \rho$. 
\end{corollary}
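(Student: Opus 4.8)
The plan is to derive the bound directly from the optimization-error estimate \eqref{main-opt-error} of Theorem \ref{ert} by substituting $T_0 = T/(2d)$, and then to combine the resulting polynomial bound with the trivial bound $\mathbb{E}[f(\hat x(T/(2d))) - f(x^*)] \le G\mathcal{K}$ to produce the minimum. The trivial bound itself follows from convexity of $f$ together with the diameter and gradient constraints: since $\hat x(T_0), x^* \in \com$, we have $f(\hat x(T_0)) - f(x^*) \le \langle \nabla f(\hat x(T_0)), \hat x(T_0) - x^*\rangle \le G\mathcal{K}$, which will supply the ``$1$'' branch of the minimum after the constant $\mathcal{C}$ is fixed.

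For the non-trivial branch, I would substitute $T_0 = T/(2d)$ into \eqref{main-opt-error}. The dominant (first) term becomes $\frac{d}{\alpha}(2d)^{(\beta-1)/\beta} T^{-(\beta-1)/\beta}\big(\mathcal{B}_1 + \frac{\mathcal{B}_2\rho^2}{1-\rho}\big)$; since $d^{1+(\beta-1)/\beta} = d^{2-1/\beta}$ and $2^{(\beta-1)/\beta}\le 2$ (because $\beta \ge 2$), this is at most $\frac{2}{\alpha} d^{2-1/\beta} T^{-(\beta-1)/\beta}\big(\mathcal{B}_1 + \frac{\mathcal{B}_2\rho^2}{1-\rho}\big)$. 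Using $\rho < 1$ I would collapse the network-dependent factor via $\mathcal{B}_1 + \frac{\mathcal{B}_2\rho^2}{1-\rho} \le \frac{\mathcal{B}_1 + \mathcal{B}_2}{1-\rho}$, so the first term is bounded by a constant multiple of $\frac{d^{2-1/\beta}}{\alpha(1-\rho)}T^{-(\beta-1)/\beta}$, which is exactly the target order.

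The one step requiring a little care is the logarithmic (second) term, which after substitution equals $\frac{\mathcal{B}_3}{\alpha(1-\rho)}\cdot\frac{2d(\log(T/(2d)) + 1)}{T}$. I must show it is also of order $\frac{d^{2-1/\beta}}{\alpha(1-\rho)}T^{-(\beta-1)/\beta}$, i.e., after multiplying through by $T/d$, that $2(\log(T/(2d)) + 1) \le \mathcal{C}'' d^{1-1/\beta} T^{1/\beta}$. Since $d \ge 1$ and $\beta \ge 2$ give $d^{1-1/\beta} \ge 1$, and $T/(2d) \le T$, it suffices to invoke the elementary inequality $\log T \le \beta T^{1/\beta}$, valid for all $T \ge 1$ (it follows by checking that $x \mapsto \beta x^{1/\beta} - \log x$ is nonnegative and nondecreasing on $[1,\infty)$), together with $1 \le T^{1/\beta}$; this yields $2(\log(T/(2d)) + 1) \le 2(\beta + 1) T^{1/\beta}$, with a constant depending on $\beta$ only, which is permitted.

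Combining the two terms gives $\mathbb{E}[f(\hat x(T/(2d))) - f(x^*)] \le \frac{\mathcal{C}'}{\alpha(1-\rho)} d^{2-1/\beta} T^{-(\beta-1)/\beta}$ for a constant $\mathcal{C}'$ assembled from $\mathcal{B}_1, \mathcal{B}_2, \mathcal{B}_3, \beta$. Together with the trivial bound $G\mathcal{K}$, setting $\mathcal{C} = \max\{G\mathcal{K}, \mathcal{C}'\}$ and splitting into the two cases according to whether $\frac{d^{2-1/\beta}}{\alpha(1-\rho)}T^{-(\beta-1)/\beta}$ is $\ge 1$ or $< 1$ produces the claimed bound with the minimum. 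I expect the only mildly delicate point to be the domination of the logarithmic term, since it is the single place where the polynomial factor $T^{1/\beta}$ must be spent to absorb a $\log T$; everything else is bookkeeping of the exponents of $d$ and the powers of $\rho$.
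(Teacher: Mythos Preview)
Your proposal is correct and follows exactly the approach sketched in the paper, which simply says to combine the bound \eqref{main-opt-error} of Theorem~\ref{ert} (with $T_0=T/(2d)$) with the trivial bound $\mathbb{E}[f(\hat x(T/(2d)))-f(x^*)]\le G\mathcal{K}$. You have supplied the missing bookkeeping the paper leaves implicit, in particular the absorption of the logarithmic term into the dominant $d^{2-1/\beta}T^{-(\beta-1)/\beta}$ rate via $\log T\le \beta T^{1/\beta}$, and the collapse of $\mathcal{B}_1+\mathcal{B}_2\rho^2/(1-\rho)$ into a single $(1-\rho)^{-1}$ factor.
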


{
We now state several important implications of our results.
\begin{remark}
\label{rem1}
Previous bounds on zero-order distributed optimization \cite{xx, Yu2019,kk} contain a dependency of  $(1-\rho)^{-2}$ in the "connectivity" parameter $\rho$. While Theorem  \ref{ert} covers a more difficult noisy setting, our bound displays a better dependency of $(1-\rho)^{-1}$. Since most common values of $1-\rho$ are of the order  $n^{-2}$ (or $n^{-1}$), this represents a substantial gain. 
\end{remark}
\begin{remark} 
\label{rem2} The case $n=1$, $\rho=0$ corresponds to usual (undistributed) zero-order stochastic optimization. Then Corollary \ref{cor:main} gives a bound of order $\min\big(1,\frac{d^{2 - 1/\beta}}{\alpha}{T^{-\frac{\beta-1}{\beta}}}\big)$. This improves upon the bound\footnote{The recent work \cite{Gasnikov} obtains the same improvement, using the gradient estimator of \cite{akhavan2020}. However as we notice below that estimator is less computationally appealing.} $\min\big(1,\frac{d^{2}}{\alpha}{T^{-\frac{\beta-1}{\beta}}}\big)$ obtained under the same assumptions in \cite{akhavan2020}. 
{Still our bound does not match the minimax lower bound established in \cite{akhavan2020} and equal to
\begin{equation}\label{lower}
\min\Big(\max(\alpha,T^{-1/2+1/\beta}), \frac{d}{\sqrt{T}},\frac{d}{\alpha}T^{-\frac{\beta-1}{\beta}}\Big).
\end{equation}
For $\alpha\asymp 1$ the lower bound \eqref{lower} scales as $\min\big(1,\frac{d}{\alpha}{T^{-\frac{\beta-1}{\beta}}}\big)$. It has the same behavior in the interesting regime of $\alpha$ not too small ($\alpha\ge T^{-1/2+1/\beta}$) and $T\ge d$. Note, however, that the lower bound \eqref{lower} is obtained for the setting with i.i.d. noise, while our upper bound is valid under adversarial noise. Therefore, it may seem  rather surprising that the ratio is only $d^{1-1/\beta}$.  
}
\end{remark}
\begin{remark}
\label{rem3}
With the same budget of queries $T$, the $2d$-point method in Algorithm \ref{algo:grad} is computationally simpler than the  methods with one or two queries per step \cite{PT90,BP2016,akhavan2020} previously suggested for the same setting. For example, the method in \cite{BP2016,akhavan2020} prescribes, at each step $t=1,\dots, T$, to generate a random variable uniformly distributed on the unit sphere in $\mathbb{R}^d$. This requires { of order} $d$ calls of one-dimensional random variable generator. Overall, in $T$ steps, the number of calls is { of order} $dT$. For our method with the same budget $T$, we make { of order} $T_0=T/(2d)$ steps and at each step we need to call the generator only once in order to get $r\sim U[-1,1]$.  Thus, with the same budget of queries, Algorithm \ref{algo:grad} needs $\sim 1/d^2$ less calls of random variable generator than the gradient estimator in~\cite{BP2016,akhavan2020}. 
\end{remark}
Finally, we notice that in Appendix~\ref{nexps} we present numerical comparisons between our algorithm and that in~\cite{akhavan2020}. These results confirm our theoretical findings: our method converges faster and the advantage is more pronounced as $d$ increases. 

\section{Improved Bounds for $\beta=2$}\label{sec6}
In this section we provide improved upper bounds for the case $\beta =2$ in Corollary \ref{cor:main}, where we relax the dependency over $d$, from $d^{3/2}$ to $d$.

Following the literature on undistributed zero-order optimization, we use a standard 2-point method with elements of the analysis developed in \cite{flaxman2004,agarwal2010,duchi2015,Shamir13,Shamir17,akhavan2020} among others. Specifically, we define
\begin{align}
 \label{eq:kkkk}
g^{i}(t) &= \frac{d}{2h_{t}}(y_{i}(t){-}y'_{i}(t))\zeta_{i}(t)
\\ \nonumber
~~\text{where}&~~y_{i}(t)=f_{i}(x^{i}(t){+}h_{t}{\zeta}_{i}(t)) {+} \xi_{i}(t),~~y'_{i}(t)=f_{i}(x^{i}(t){-}h_{t}{\zeta}_{i}(t)){+}\xi_{i}'(t),   
\end{align}
with the random variables ${\zeta}_{i}(t)$, $1\leq i \leq n$, $1 \leq t \leq T$, that are i.i.d. uniformly distributed on the unit Euclidean sphere in $\mathbb{R}^d$.
We make the following assumption on the noise analogous to Assumption~\ref{ass1}.

\begin{assumption} 
\label{ass-last}
For all integers $t$ and all $i \in \{1,\dots,n\}$ the following properties hold.
\begin{itemize}   
\item[\text{(i)}]  The random variables ${\zeta}_{i}(t)$ are independent of $\xi_{i}(t)$, $\xi_{i}'(t)$ {and from the $\sigma$-algebra $\mathcal{F}_{t}$,} 
\item[\text{(ii)}] $\mathbb{E}[(\xi_{i}(t))^2] \leq \sigma^2$, $\mathbb{E}[(\xi_{i}'(t))^2] \leq \sigma^2$ for some $\sigma\ge 0$.
\end{itemize}
\end{assumption}
 
\begin{restatable}{theorem}{ib}\label{ib}
Let $f$ be an $\alpha$-strongly convex function. 
Let Assumptions \ref{rho}, \ref{nat}, and \ref{ass-last} hold with $\beta = 2$. Let $\com$ be a convex compact subset of $\mathbb{R}^d$, and assume that diam$(\com)\leq \mathcal{K}$. Assume that $\max_{x \in \com}\norm{\nabla f_{i}(x)}\leq G$, for $1\leq i \leq n$. Let the updates $x^i(t), \bar x(t)$ be defined by Algorithm \ref{algo}, in which the gradient estimator for $i$-th agent is defined by 
\eqref{eq:kkkk}, and $\eta_{t} = \frac{1}{\alpha t}$, $h_{t}=\Big(\frac{3d^{2}\sigma^{2}}{2L\alpha t +9L^{2}d^{2}}\Big)^{1/4}$. Then for the estimator $\tilde{x}(T)=\frac{1}{T-\floor{{T}/{2}}}\sum_{t=\floor{{T}/{2}}+1}^{T}\bar{x}(t)$ we have
\begin{align*}
    \mathbb{E}[f(\tilde x(T))-f(x^{*})]&\leq \frac{\mathcal{B}}{1-\rho}\left(\frac{d}{\sqrt{\alpha T}}+\frac{d^2}{\alpha T}\right),
\end{align*}
where $\mathcal{B}>0$ is a constant independent of $T, d, \alpha, n, \rho$. 
\end{restatable}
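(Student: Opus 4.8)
The plan is to follow the template behind Theorem~\ref{ert} --- a one-step inequality for the averaged iterate $\bar x(t)$, a bound on the mean discrepancy, and a summation over $t$ --- but with two changes tailored to $\beta=2$. I would replace the $2d$-point estimator by the two-point spherical estimator \eqref{eq:kkkk}, and I would route the gradient-approximation bias through the \emph{smoothed} objective rather than through a Young split against the strong-convexity term. The first change cuts the per-step query count from $2d$ to $2$ (so that the number of steps is $\asymp T$ rather than $T/(2d)$), which is what turns the $d^{3/2}$ of Corollary~\ref{cor:main} into $d$; the second is what improves the dependence on $\alpha$ from $\alpha^{-1}$ to $\alpha^{-1/2}$, and is the genuinely new ingredient.

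First I would prove the analogues of Lemmas~\ref{lem1new} and \ref{lem2new} for \eqref{eq:kkkk}. Writing $\hat f_{i,h}(x)=\mathbb{E}_v[f_i(x+hv)]$ with $v$ uniform on the unit ball, the spherical-smoothing identity gives $\mathbb{E}[g^i(t)\mid\mathcal{F}_t]=\nabla\hat f_{i,h_t}(x^i(t))$. The adversarial noise drops out of this conditional expectation: by Assumption~\ref{ass-last}(i) the variable $\zeta_i(t)$ is independent of $(\xi_i(t),\xi_i'(t))$ and of $\mathcal{F}_t$ and has zero mean, while the two queries enter antisymmetrically. Two-smoothness then yields the bias bound $\norm{\nabla\hat f_{i,h_t}(x)-\nabla f_i(x)}\le\bar L h_t$, and, using $\norm{\zeta_i(t)}=1$ with a Taylor expansion, the second-moment bound
\[
\mathbb{E}\big[\norm{g^i(t)}^2\mid\mathcal{F}_t\big]\ \le\ c_1\,dG^2+c_2\,d^2\bar L^2 h_t^2+c_3\,\frac{d^2\sigma^2}{h_t^2},
\]
whose dominant term is the noise term $d^2\sigma^2/h_t^2$. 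I would also record two elementary smoothing facts: $\absin{\hat f_{i,h}(x)-f_i(x)}\le\tfrac{\bar L}{2}h^2$ for all $x$, and $\hat f_{h_t}=\tfrac1n\sum_i\hat f_{i,h_t}$ (the smoothing of $f$) inherits $\alpha$-strong convexity from $f$.

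Next I would obtain a mean-discrepancy bound by repeating the proof of Lemma~\ref{garnier} with the new second-moment bound, getting a recursion $\Delta(t+1)\le\rho\,\Delta(t)+\tfrac{\rho^2}{1-\rho}\eta_t^2\bar V_t$ with $\bar V_t=\tfrac1n\sum_i\mathbb{E}\norm{g^i(t)}^2$, and solving the resulting convolution with $\eta_t=1/(\alpha t)$ and the prescribed $h_t$. I would then write the one-step inequality as in \eqref{plkj}, but phrased for the smoothed objective: using $\alpha$-strong convexity of $\hat f_{h_t}$ and the recursion $\bar x(t+1)=\bar x(t)-\eta_t\bar g(t)+\bar z(t)$, I bound $\hat f_{h_t}(\bar x(t))-\hat f_{h_t}(x^*)$, with the purely distributed term ${\rm Bias2}$ bounded as in \eqref{plkj-1} and the discrepancy part of ${\rm Bias1}$ as in \eqref{plkj-2}; both acquire a factor $(1-\rho)^{-1}$ once combined with the discrepancy bound. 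Passing back to $f$ costs only an additive approximation error: since $\absin{\hat f_{h_t}-f}\le\tfrac{\bar L}2h_t^2$ at $\bar x(t)$ and at $x^*$,
\[
f(\bar x(t))-f(x^*)\ \le\ \big[\hat f_{h_t}(\bar x(t))-\hat f_{h_t}(x^*)\big]+\bar L\,h_t^2 .
\]
This is the step that avoids spending the strong-convexity budget on the bias and hence preserves the $\alpha^{-1/2}$ scaling.

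Finally I would sum over the tail $t\in\{\floor{T/2}+1,\dots,T\}$. With $\eta_t=1/(\alpha t)$ the coefficient of $\norm{\bar x(t)-x^*}^2$ telescopes to zero in the interior, leaving a single boundary term at $t=\floor{T/2}$, which I would control by a separate anytime estimate of $\mathbb{E}\norm{\bar x(\floor{T/2})-x^*}^2$ of the same order. Substituting $h_t=\big(3d^2\sigma^2/(2L\alpha t+9L^2d^2)\big)^{1/4}$, the per-step contribution of the noise term $\eta_t d^2\sigma^2/h_t^2$ and of $\bar L h_t^2$ is of order $d/\sqrt{\alpha t}$ for $t\gtrsim L d^2/\alpha$ and of order $d^2/(\alpha t)$ for smaller $t$; using $\sum_{t\in(\frac T2,T]}t^{-1/2}\asymp\sqrt T$ and $\sum_{t\in(\frac T2,T]}t^{-1}\asymp1$ (the latter being precisely why tail-averaging removes the logarithmic factor that plain averaging would incur), and dividing by $T-\floor{T/2}\asymp T/2$, produces the two terms $\tfrac{d}{\sqrt{\alpha T}}$ and $\tfrac{d^2}{\alpha T}$. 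A final application of Jensen's inequality to $\tilde x(T)=\tfrac1{T-\floor{T/2}}\sum_{t>\floor{T/2}}\bar x(t)$ then gives the stated bound, with the overall $(1-\rho)^{-1}$ coming from the distributed terms. The main obstacle is exactly the bias handling: obtaining $\alpha^{-1/2}$ instead of $\alpha^{-1}$ hinges on routing the $\beta=2$ smoothing bias through the additive error $\bar L h_t^2$ of the smoothed objective rather than through a Young split against $\tfrac{\alpha}{2}\norm{\bar x(t)-x^*}^2$; a secondary difficulty is the bookkeeping that produces a single (not squared) power of $(1-\rho)^{-1}$ and the clean $d^2/(\alpha T)$ term with no logarithm.
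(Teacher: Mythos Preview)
Your proposal is correct and follows essentially the same route as the paper: the paper uses exactly the smoothed objective $\hat f_t$ (Lemmas~\ref{bis:yy} and \ref{bis:xx}) to absorb the bias additively as $2Lh_t^2$, proves the discrepancy recursion (Lemma~\ref{horn}), establishes the anytime bound $\mathbb{E}\norm{\bar x(t)-x^*}^2\le\tfrac{C}{1-\rho}\big(\tfrac{d}{\alpha^{3/2}\sqrt t}+\tfrac{d^2}{\alpha^2 t}\big)$ (Lemma~\ref{hhm}) to handle the telescoping boundary term at $t=\floor{T/2}+1$, sums over the tail, and finishes with convexity of $f$. The only superfluous item in your sketch is the gradient bias bound $\norm{\nabla\hat f_{i,h}-\nabla f_i}\le\bar L h$, which neither you nor the paper actually need once the argument is routed through the function-level smoothing error.
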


The main idea of the proof is to use surrogate functions $\hat{f}^{i}_{t}(x)$, for $1\le i \le n$,  defined, for every $x \in \mathbb{R}^{d}$,  
as $\hat{f}^{i}_{t}(x) = \mathbb{E}f_{i}(x+h_{t}\tilde{\zeta})$, where the expectation with respect to the random vector $\tilde \zeta$ uniformly distributed on the unit ball $B_d=\{u\in \mathbb{R}^d:\|u\|\le 1 \}$. 
A result, which can be traced back to \cite{NY1983} implies the fact that $g^{i}(t)$ is an unbiased estimator of the gradient of the surrogate function $\hat f^{i}_t$ at $x^{i}(t)$.
Thus, we can consider Algorithm \ref{algo} as a gradient descent for the surrogate function. Then replacing $f_{i}$ and $f$ by the surrogate functions with the cost of the order $h_{t}^{2}$, we can recover the initial problem. This method does not work for $\beta>2$ since the error of approximation by surrogate function becomes of bigger order than the optimal rate 
$T^{-\frac{\beta-1}{\beta}}$. The results that we implement as  tools for this section are given in Appendix \ref{appb2}.

Combining Theorem \ref{ib} with the obvious bound $\mathbb{E}[f(\tilde x(T))-f(x^{*})]\le {G\mathcal{K}}$ we obtain  
\begin{align}\label{upper-2}
 \mathbb{E}[f(\tilde x(T))-f(x^{*})]\le \frac{\mathcal{B'}}{1-\rho}\min\Big(1,\frac{d}{\sqrt{\alpha T}}\Big),   
\end{align}
where $\mathcal{B}'>0$ is a constant independent of $T, d, \alpha, n, \rho$. 
By comparing this upper bound with the minimax lower bound \eqref{lower} for $\beta=2$, one can note that \eqref{upper-2} is optimal with respect to the parameters $T$ and $d$ when $\alpha\asymp 1$.

{
\section{Discussion}
\label{sec:disc}
We expand our discussion on previous related work, comparing our results to the state-of-the-art distributed and undistributed zero-order optimization settings, and highlight few key open problems.

{\bf Comparison to Zero-Order Distributed Settings~} 
Distributed opimization with noisy functions' queries was considered in detail in \cite{Soummya,Sahu}, where the setting differs from ours in some key aspects: the updates are obtained not as in Step 2 of Algorithm \ref{algo} but rather via decentralized techniques, matrix $W$ is random, the noise is zero-mean random rather than adversarial, and  2-point gradient estimator is used. Papers \cite{Soummya,Sahu} provide, for $\beta=2$ and $\beta=3$, bounds on $\mathbb{E}[\|x^i(T)-x^*\|^2]$ 
of the order at least $\frac{n^{3/2}}{(1-\rho)^2}{T^{-1/2}}$ and $\frac{n^{3/2}}{(1-\rho)^2}{T^{-2/3}}$, respectively, as functions of $n$, $\rho$ and $T$. 
Their bounds contain uncontrolled terms of the form $\mathbb{E}[\|x^i(k_0)-x^*\|^2]$ for some large enough $k_0=k_0(n, \alpha, d)$ leaving unclear the resulting rate. Paper \cite{Hajinezhad2017} considers 2-point methods with stochastic queries but assume that the noise is the same for both queries and deal with non-convex optimization. Noisy-free  zero-order distributed optimization is studied by \cite{xx, Yu2019,kk}. From these, \cite{kk} is the closest to our work as it builds on the updates as at Step 2 of Algorithm \ref{algo} (though without projections). {The bounds obtained therein are of the order $(1-\rho)^{-2}$ considered as functions of $\rho$, although they hold for the larger class of gradient dominant functions}. As noted in Remark \ref{rem1} the bound of Theorem  \ref{ert} scales only as $(1-\rho)^{-1}$ and this bound holds true, in particular, for noisy-free setting, which is its special case corresponding to $\sigma=0$. Since most common values of $1-\rho$ are of the order  $n^{-2}$ (or $n^{-1}$), this represents a substantial gain. Moreover, Theorem  \ref{ert}  covers a difficult noise setting as we deal with adversarial noise. It is also worthwhile to note that the first-order distributed optimization exhibits much better dependency on $\rho$ since bounds that scale as $(1-\rho)^{-1/2}$ can be achieved 
\cite{duchi2015,Scaman}.

{\bf Computational and Statistical Advantage of the Proposed Gradient Estimator~} As we highlighted in Section \ref{sec3} the gradient estimator in Algorithm \ref{algo:grad} requires $2d$ function queries. At first sight this seems problematic when the dimension $d$ is high, as they need at least $T=2d$ queries. However, the lower bounds in \cite{Shamir13,akhavan2020} reported in \eqref{lower} above indicate that no estimator can achieve nontrivial convergence rate for zero-order optimization when $T\lesssim d^{\frac{\beta}{\beta-1}}$.  Thus, having the total budget of $T\gg d$ queries is a necessary condition for success of any zero-order stochastic optimization method. Algorithms with one or two queries per step can, of course, be realized for $T\lesssim d$ but in this case they do not   enjoy any nontrivial error behavior. Moreover, by Remark \ref{rem3}, with the same total budget of queries $T$, the gradient estimator from Algorithm \ref{algo:grad} is computationally more efficient\footnote{{One may object that the computation bottleneck in zero-order optimization is in function evaluation; however such costs 
are {\em external} to the optimization procedure, for example they may be performed by black-box software running on external machines or devices. Thus such costs should not be taken into account in evaluating the procedure itself. In this sense our computational speedup is important for high dimensional settings.}} than the estimators in  \cite{PT90,BP2016,akhavan2020}, since with the same budget of queries, it needs $1/d^2$ less calls of random variable generator than it would be for the gradient estimator in~\cite{BP2016,akhavan2020}. At the same time, as detailed in Remark \ref{rem2} the proposed gradient estimator yields a better rates on the optimization error. We conclude that the proposed zero-order optimization procedure provides both a computational and statistical improvement over the state-of-the-art methods in \cite{akhavan2020}.

{\bf Limitations and Future Work~} A main problem, which remains open, is to study whether the dependency of $(1-\rho)^{-1}$ in the upper bounds in Corollary \ref{cor:main} and Theorem \ref{ib} is minimax optimal.  Moreover, in the standard (undistributed) setting it remains an open problem to design a zero-order optimization procedure that meets the minimax lower bound \ref{lower} with respect all problem parameters ($T,d,\beta$ and $\alpha$). Further directions of research include the analysis of disturbed zero-order algorithms for larger classes of functions, such as $\alpha$-gradient dominant ones, as well as extension of our results to stochastic updates or asynchronous activation schemes.}




\newpage
\appendix

\section{Auxiliary Lemma}
\begin{lemma}\label{lem:contraction}
Let $W$ be a matrix satisfying Assumption \ref{rho} and let $x^i = \sum_{j=1}^{n} {W}_{i,j} u^j$ for $i=1,\dots,n$, where $u^1, \dots, u^n$ are some vectors in $\mathbb{R}^d$. Set $\bar{x} = n^{-1}\sum_{i=1}^{n}  x^i$, $\bar{u} = n^{-1}\sum_{i=1}^{n}  u^i$. Then 
$$
\sum_{i=1}^{n}\norm{x^{i} - \bar{x}}^2  \le \rho^2 \sum_{i=1}^{n}\norm{u^{i} - \bar{u}}^2. 
$$
\end{lemma}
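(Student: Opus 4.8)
The plan is to recast the claim as a statement about the operator norm of the matrix $M \eqdef W - n^{-1}\mathbb{1}\mathbb{1}^\top$, whose spectral norm is exactly $\rho$ by Assumption \ref{rho}. First I would stack the input vectors as the rows of a matrix $U \in \mathbb{R}^{n \times d}$, so that the matrix $X \eqdef WU$ has $x^i$ as its $i$-th row. Writing $P \eqdef n^{-1}\mathbb{1}\mathbb{1}^\top$ for the orthogonal projection onto $\mathrm{span}(\mathbb{1})$, the two sides of the inequality become squared Frobenius norms: $\sum_{i}\norm{u^i-\bar u}^2 = \norm{(\mathbb{I}-P)U}_F^2$ and $\sum_{i}\norm{x^i-\bar x}^2 = \norm{(\mathbb{I}-P)WU}_F^2$, where $\norm{\cdot}_F$ denotes the Frobenius norm. (Double stochasticity in fact also gives $\bar x = \bar u$, but this is not needed for the argument.)

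The key algebraic step is to exploit the interaction between $W$ and $P$. Since $W$ is doubly stochastic we have $W\mathbb{1}=\mathbb{1}$, hence $WP=P$, and by symmetry of $W$ also $PW=P$. Consequently $M=W-P$ satisfies $MP=PM=0$, and moreover $(\mathbb{I}-P)W = W-PW = W-P = M$. Substituting this identity, the target quantity becomes $\norm{(\mathbb{I}-P)WU}_F^2 = \norm{MU}_F^2$, and using $MP=0$ once more gives $MU = M(\mathbb{I}-P)U$.

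It then remains to apply the submultiplicativity bound $\norm{AB}_F \le \norm{A}_*\norm{B}_F$ with $A=M$ and $B=(\mathbb{I}-P)U$, which yields $\norm{MU}_F \le \norm{M}_*\,\norm{(\mathbb{I}-P)U}_F = \rho\,\norm{(\mathbb{I}-P)U}_F$. Squaring and translating back to the sum-over-agents form delivers the claim. I expect no serious obstacle: the only point requiring care is verifying the commutation identities $PW=WP=P$ and $MP=0$, which rest solely on $W$ being symmetric and doubly stochastic; everything else is routine. An equivalent route, if one prefers to avoid the stacking formalism, is to treat each of the $d$ coordinates separately as a vector $v\in\mathbb{R}^n$, prove the scalar ($d=1$) inequality $\norm{(\mathbb{I}-P)Wv}^2\le \rho^2\norm{(\mathbb{I}-P)v}^2$ by the same identities, and sum over the coordinates.
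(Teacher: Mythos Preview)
Your proposal is correct and follows essentially the same route as the paper's proof: both stack the vectors into an $n\times d$ matrix, use the centering projection $\mathbb{I}-P$ (the paper calls it $H$), exploit the identity $(\mathbb{I}-P)W = W-P$ coming from double stochasticity and symmetry, and finish with a spectral--Frobenius (equivalently trace) norm inequality. The only cosmetic difference is that the paper writes everything via traces and passes through $WHW = H\overline{W}^{2}H$ before bounding, whereas you go directly through $\norm{M(\mathbb{I}-P)U}_F \le \norm{M}_*\norm{(\mathbb{I}-P)U}_F$; the content is the same.
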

\begin{proof} Introduce  the matrices $X^\top = (x^1, \dots, x^n)\in \mathbb{R}^{d\times n}$, $U^\top = (u^1, \dots, u^n)\in \mathbb{R}^{d\times n}$ and the centering matrix $H=\mathbb{I} - \frac{1}{n} \mathbb{1}  \mathbb{1}^\top \in \mathbb{R}^{n\times n} $. Notice that 
$
\sum_{i=1}^{n}\norm{x^{i} - \bar{x}}^2 = {\rm Tr}(\Sigma),
$
where ${\rm Tr}(\Sigma)$ is the trace of the matrix 
$$
\Sigma = \sum_{i=1}^{n}(x^{i} - \bar{x})(x^{i} - \bar{x})^\top = \sum_{i=1}^{n}x^{i}(x^{i})^\top - \bar{x}\bar{x}^\top = X^\top H X.
$$
It is not hard to check that ${\rm Tr}(\Sigma) = {\rm Tr}(U^\top W HWU)$. Moreover, as $W$ is symmetric and $W\mathbb{1} =\mathbb{1}$   we have $HW = W - \frac{1}{n} \mathbb{1}  \mathbb{1}^\top: = \overline W = W H$. Thus,    $W HW = WH^2W = H \overline W^2 H$ and
$$
{\rm Tr}(\Sigma) = {\rm Tr}(U^\top H \overline W^2 H U)\le \|\overline W^2\|_* \,{\rm Tr}(U^\top H^2 U) \le \rho^2 {\rm Tr}(U^\top H U)= \rho^2 \sum_{i=1}^{n}\norm{u^{i} - \bar{u}}^2.
$$

\end{proof}
\section{Proofs for Section \ref{sec:gradient-estimator}}
\label{app:B}
\lemonenew*
\begin{proof}
By Taylor expansion we have
\begin{eqnarray}
 \frac{f(x{+}hre_j) {-} f(x{-}hre_j)}{2h} = 
 \frac{\partial f(x)}{\partial x_j}  r+ \frac{1}{h} \sum_{2\leq m\leq \ell, m \,\text{odd}} \frac{(rh)^{m}}{m!} \frac{\partial^m f(x)}{\partial x_j^m}  +
\frac{R(hre_j) {-} R(-hre_j)}{2h}, \nonumber
\end{eqnarray}
where $|R(\pm hre_j)|\le L\|hre_j\|^\beta = L|r|^\beta h^\beta$. Using \eqref{ass:K} it follows that
\[
\Big|\mathbb{E}[ {g}_j ] -  \frac{\partial f(x)}{\partial x_j}\Big|=  \Big|\mathbb{E} \left[ \frac{f(x+hre_j) - f(x-hre_j)}{2h} K(r)\right] -  \frac{\partial f(x)}{\partial x_j}\Big| \le L \kappa_\beta h^{\beta-1} ,
\]
which implies the result.\end{proof}
\lemtwonew*

\begin{proof}
Fix $j\in {1,\dots,d}.$ Using the inequality $(a+b+c)^2 \leq 3 (a^2+b^2+c^2)$ and the independence between $r$ and $(\xi_j,\xi'_j)$ we have
\begin{eqnarray}
\label{4}
\mathbb{E} [{g}_j^2] & = &\frac{1}{4h^2} \mathbb{E} \left[ (f(x+hre_j) - f(x-hre_i) + \xi_i - \xi_i')^2 K^2(r)\right]\\ \nonumber
& \leq & \frac{3}{4h^2} \mathbb{E} \left[ \left(\big(f(x+hre_j) - f(x-hre_j)\big)^2 + 2\sigma^2\right) K^2(r) \right]. \nonumber
\nonumber
\end{eqnarray}
The same calculations as in the proof of Lemma 2.4 in \cite{akhavan2020} yield 
\begin{eqnarray*}
\nonumber
\big(f(x+hr e_j) - f(x-hre_j)\big)^2   
&\leq & 3 \left( \frac{{\bar L}^2}{2} \|h re_j\|^4 + 
4\langle\nabla f(x),h re_j\rangle^2 \right), 
\end{eqnarray*}
Finally, we combine this inequality with \eqref{4} to obtain
 \begin{eqnarray}
\nonumber
\mathbb{E} [{g}_j^2] \leq \frac{3}{2} \kappa \left( \frac{\sigma^2}{h^2}+ \frac{3 {\bar L}^2}{4} h^2  \right) + 9 \kappa {\mathbb{E} [ \langle\nabla f(x), e_i\rangle^2]} ,
\end{eqnarray}
which immediately implies the lemma.
\end{proof}

\section{Proofs for Section \ref{sec3}}\label{app3}

Recall the notation $\Delta(t)=n^{-1}\sum_{i=1}^{n}\mathbb{E}[\norm{x^{i}(t) - \bar{x}(t)}^2]$,  $\bar{g}(t) =\frac{1}{n}\sum_{i=1}^{n} g^{i}(t)$, and  $z^{i}(t)=\proj_{\com}\Big(x^{i}(t)- \eta_t g^{i}(t)\Big)-(x^{i}(t)- \eta_t g^{i}(t)).$ We also set $\bar{z}(t) =\frac{1}{n}\sum_{i=1}^{n} z^{i}(t)$.
\etrax*
\begin{proof}
Set $V(t)=\sum_{i=1}^{n}\norm{x^{i}(t) - \bar{x}(t)}^2.$ The definition of Algorithm \ref{algo} and Lemma \ref{lem:contraction} imply:
\begin{align}\nonumber
    V(t+1) &\leq \rho^{2}\sum_{i=1}^{n}\norm{x^{i}(t)-\bar{x}(t)-\eta_{t}(g^{i}(t)-\bar{g}(t))+z^{i}(t)-\bar{z}(t)}^{2}.
    \end{align}
    The result is immediate if $\rho=0$. Therefore, in rest of the proof we assume that $\rho>0$. We have
    \begin{align}
    \label{cx1}
   V(t+1) &\leq
    \rho^{2}\sum_{i=1}^{n}\Big[V(t)+\eta_{t}^{2}\norm{g^{i}(t)-\bar{g}(t)}^{2}+\norm{z^{i}(t)-\bar{z}(t)}^{2}
    \\\label{cb2}&\quad\quad-2\eta_{t}\Big\langle x^{i}(t)-\bar{x}(t), g^{i}(t) - \bar{g}(t)\Big\rangle \\\label{cb3}&\quad\quad-2\eta_{t}\Big\langle g^{i}(t)-\bar{g}(t), z^{i}(t) - \bar{z}(t)\Big\rangle 
    \\\label{cb4}&\quad\quad+2\Big\langle x^{i}(t)-\bar{x}(t), z^{i}(t) - \bar{z}(t)\Big\rangle\Big].
\end{align}
For any $z \in \mathbb{R}^{d}$, we have $\sum_{i=1}^{n}\norm{g^{i}(t)- \bar{g}(t)}^{2} \leq \sum_{i=1}^{n}\norm{g^{i}(t)- z}^{2}$, so that
\begin{align*}\nonumber
    \eta_{t}^{2}\sum_{i=1}^{n}\mathbb{E}\big[\norm{g^{i}(t)- \bar{g}(t)}^{2}|\mathcal{F}_{t}\big]&\leq \eta_{t}^{2}\sum_{i=1}^{n} \mathbb{E}\big[\norm{g^{i}(t)}^{2}|\mathcal{F}_{t}\big].
\end{align*}
Next, from the definition of the projection, 
\begin{align}\nonumber
  \norm{z^{i}(t)}&= \norm{\proj_{\com}\Big(x^{i} - \eta_{t}g^{i}(t)\Big)-(x^{i} - \eta_{t}g^{i}(t))} \\
  &\le \norm{x^{i} -(x^{i} - \eta_{t}g^{i}(t))} = \eta_{t} \norm{g^{i}(t)}.\label{zit}
\end{align}
Therefore, for the term containing $\norm{z^{i}(t)-\bar{z}(t)}^{2}$ in \eqref{cx1} we obtain
\begin{align*}
    \sum_{i=1}^{n}\mathbb{E}[\norm{z^{i}(t)-\bar{z}(t)}^{2}|\mathcal{F}_{t}] &\leq \sum_{i=1}^{n}\mathbb{E}[\norm{z^{i}(t)}^{2}|\mathcal{F}_{t}] 
    \leq \eta_{t}^{2}
    \sum_{i=1}^{n}\mathbb{E}\Big[\norm{g^{i}(t)}^{2}|\mathcal{F}_{t}\Big].
\end{align*}
For the expression in (\ref{cb2}), by decoupling we get
\begin{align*}
    -2\eta_{t}\sum_{i=1}^{n}\mathbb{E}\Big[\Big\langle x^{i}(t)-\bar{x}(t), g^{i}(t) - \bar{g}(t)\Big\rangle|\mathcal{F}_{t} \Big] &\leq \lambda V(t) + \frac{\eta_{t}^{2}}{\lambda}\sum_{i=1}^{n}\mathbb{E}\Big[\norm{g^{i}(t)}^{2}|\mathcal{F}_{t}\Big],
\end{align*}
where $\lambda>0$ is a value to be chosen later. For the expression in (\ref{cb3}), we have
\begin{align*}
    -2\eta_{t}\sum_{i=1}^{n}\mathbb{E}\Big[\Big\langle g^{i}(t)-\bar{g}(t), z^{i}(t) - \bar{z}(t)\Big\rangle|\mathcal{F}_{t} \Big] &\leq \eta_{t}^{2}\sum_{i=1}^{n}\mathbb{E}\Big[\norm{g^{i}(t)-\bar{g}(t)}^{2}|\mathcal{F}_{t}\Big] + \sum_{i=1}^{n}\mathbb{E}\Big[\norm{z^{i}(t)-\bar{z}(t)}^{2}|\mathcal{F}_{t}\Big]\\&\leq2\eta_{t}^{2}\sum_{i=1}^{n}\mathbb{E}\Big[\norm{g^{i}(t)}^{2}|\mathcal{F}_{t}\Big].
\end{align*}
Similarly, for the expression in (\ref{cb4}), using the Cauchy–Schwarz inequality we get
\begin{align*}
    2\sum_{i=1}^{n}\mathbb{E}\Big[\Big\langle x^{i}(t)-\bar{x}(t), z^{i}(t)-\bar{z}(t)\Big\rangle|\mathcal{F}_{t}\Big]&\leq2\sum_{i=1}^{n}\mathbb{E}\Big[\norm{x^{i}(t)-\bar{x}(t)}\norm{z^{i}(t)-\bar{z}(t)}|\mathcal{F}_{t}\Big]\\&\leq
    \lambda V(t) + \frac{1}{\lambda}\sum_{i=1}^{n}\mathbb{E}\Big[\norm{z^{i}(t)-\bar{z}(t)}^{2}|\mathcal{F}_{t}\Big]\\&\leq
    \lambda V(t) + \frac{\eta_{t}^{2}}{\lambda}
    \sum_{i=1}^{n}\mathbb{E}\Big[\norm{g^{i}(t)}^{2}|\mathcal{F}_{t}\Big].
\end{align*}
Combining the above inequalities yields
\begin{align}\label{cb5}
    \mathbb{E}[V(t+1)|\mathcal{F}_{t}] \leq \rho^{2}(1+2\lambda)V(t) + \rho^{2}\Big(4 + \frac{2}{\lambda}\Big)\eta_{t}^{2} \sum_{i=1}^{n}\mathbb{E}\Big[\norm{g^{i}(t)}^{2}|\mathcal{F}_{t}\Big].
\end{align}
Taking expectations in (\ref{cb5}) and applying Lemma \ref{lem2new} we obtain
{
\begin{align*}
    \Delta(t+1) \leq \rho^{2}(1+2\lambda) \Delta(t) +
    \rho^{2}\Big(4 + \frac{2}{\lambda}\Big)\eta_{t}^{2}
    \Big(9\kappa G^2+d\Big(\frac{9h_{t}^{2}\kappa\bar{L}^{2}}{8}+\frac{3\kappa \sigma^{2}}{2h_{t}^{2}}\Big)\Big).
\end{align*}
}
Choose here $\lambda = \frac{1-\rho}{2\rho}$. Then, using the fact that $\eta_{t} = \frac{2}{\alpha t}$, $h_{t} = t^{-\frac{1}{2\beta}}$ we find
\begin{align}\label{cb13}
    \Delta(t+1) \leq \rho\Delta(t) + \mathcal{A}_{1} \frac{\rho^{2}}{1-\rho}\cdot \frac{d}{\alpha^{2}}t^{-\frac{2\beta-1}{\beta}},
\end{align}
where $\mathcal{A}_{1} = \frac{144\kappa {G^2}}{d}+18\kappa \bar{L}^{2}+24\kappa \sigma^{2}$. Due to the recursion in (\ref{cb13}) we have, for any $t\ge 3$,
\begin{align}\nonumber
    \Delta(t+1) &\leq \rho^{t} \Delta(1) +  \mathcal{A}_{1} \frac{\rho^{2}}{1{-}\rho}\cdot\frac{d}{\alpha^{2}}\sum_{s=1}^{t}s^{-\frac{2\beta-1}{\beta}}\rho^{t-s}\\\label{cb6}&\leq
     \mathcal{A}_{1} \frac{\rho^{2}}{1{-}\rho}\cdot \frac{d}{\alpha^{2}}
     \Big(\frac{1}{\floor{\frac{t}{2}}}\sum_{s=1}^{\floor{\frac{t}{2}}}s^{-\frac{2\beta-1}{\beta}}\hspace{-.1truecm}
     \sum_{k=t-\floor{\frac{t}{2}}}^{t-1}\rho^{k}+
     \frac{1}{\floor{\frac{t}{2}}}\sum_{s=\floor{\frac{t}{2}}+1}^{t}s^{-\frac{2\beta-1}{\beta}}
     \sum_{k=0}^{t-\floor{\frac{t}{2}}-1}\rho^{k}\Big),
\end{align}
where $ \Delta(1) =0$ by the choice of initial values and the last inequality uses the fact that if the function $\phi_1(\cdot)$ is monotone decreasing  and $\phi_2(\cdot)$ is monotone increasing then
$$
\frac1S \sum_{s=1}^{S} \phi_1(s) \phi_2(s) \le \left(\frac1S \sum_{s=1}^{S} \phi_1(s)\right) \left(\frac1S \sum_{s=1}^{S} \phi_2(s)\right),
$$
see, e.g.,  \cite[Theorem A.19]{DGL}. The sums in \eqref{cb6}  satisfy 
\begin{align}
\nonumber
    \sum_{s=1}^{\floor{\frac{t}{2}}}s^{-\frac{2\beta-1}{\beta}}  \leq 1 {+} \int_{1}^\infty s^{-\frac{2\beta{-}1}{\beta}} = \frac{2\beta -1}{\beta-1}, \quad 
    \sum_{s=\floor{\frac{t}{2}}+1}^{t}s^{-\frac{2\beta-1}{\beta}} \leq \frac{t}{2}  \left(\frac{t}{2}\right)^{-\frac{2\beta-1}{\beta}}=  {2^{\frac{\beta-1}{\beta}}t^{-\frac{\beta-1}{\beta}}},
\end{align}

\begin{align}
\nonumber
{
    \sum_{k=0}^{t-\floor{\frac{t}{2}}-1}\rho^{k} \leq \frac{1}{1-\rho}, \quad \quad 
    \sum_{k=t-\floor{\frac{t}{2}}}^{t-1}\rho^{k} \le \sum_{k=\floor{\frac{t}{2}}}^{t-1}\rho^{k}\leq {t} \rho^{\floor{\frac{t}{2}}}/2\leq \frac{8}{\log(1/\rho)t},
    }
\end{align}
where the last inequality follows from the fact that $\rho^{k} \leq \frac{1}{\log(1/\rho)k^{2}}$ for any positive integer $k$. 
Plugging the above inequalities 
in (\ref{cb6}) gives
\begin{align*}
    \Delta(t+1) &\leq \mathcal{A}_{1} \frac{\rho^{2}}{1-\rho}\frac{d}{\alpha^{2}}\Big(\frac{{24}}{\log(1/\rho)t^{2}}{\frac{2\beta-1}{\beta-1}}+{3(2^{\frac{\beta-1}{\beta}})}\frac{t^{-\frac{2\beta-1}{\beta}}}{1-\rho}\Big)\\&\leq \mathcal{A}_{2}\frac{\rho^{2}}{(1-\rho)^{2}}\frac{d}{\alpha^{2}}t^{-\frac{2\beta-1}{\beta}},
\end{align*}
where $\mathcal{A}_{2} = \left( {24\frac{2\beta-1}{\beta-1}+3(2^{\frac{\beta-1}{\beta}})}\right) {\cal A}_1$. Therefore, setting $\mathcal{A}: = 2\mathcal{A}_{2}$ we conclude that, for $t\ge3$,
\begin{align*}
   \Delta(t) 
   &\leq \mathcal{A}\frac{\rho^{2}}{(1-\rho)^{2}}\frac{d}{\alpha^{2}}t^{-\frac{2\beta-1}{\beta}}.
\end{align*}
For $t \in \{1,2\}$ the bound of the lemma holds trivially since $\bar x$ and all $x^i$ belong to the compact $\com$. 

\end{proof}
\onthe*
\begin{proof}
From the definition of Algorithm \ref{algo} and \eqref{zit} we obtain 
\begin{align*}
 \norm{\bar{x}(t+1)-x}^{2}&=\norm{\bar{x}(t)-x}^{2}+\norm{\bar{z}(t)}^{2} + \eta_{t}^{2}\norm{\bar{g}(t)}^{2} 
 \\& -2\eta_{t}\langle \bar{g}(t), \bar{x}(t)-x\rangle + 2\langle \bar{z}(t), \bar{x}(t)-x\rangle - 2\eta_{t}\langle \bar{z}(t), \bar{g}(t)\rangle \\&\leq   \norm{\bar{x}(t)-x}^{2} -2\eta_{t}\langle \bar{g}(t), \bar{x}(t)-x\rangle + 2\langle \bar{z}(t), \bar{x}(t)-x\rangle +\frac{4\eta_{t}^{2}}{n}\sum_{i=1}^{n}\norm{g^{i}(t)}^{2}. 
\end{align*}
It follows that
\begin{align*}
    \langle \bar{g}(t),\bar{x}(t)-x\rangle \leq \frac{\norm{\bar{x}(t)-x}^{2}-\norm{\bar{x}(t+1)-x}^{2}}{2\eta_{t}}+\frac{1}{\eta_{t}}\langle \bar{z}(t), \bar{x}(t)-x \rangle + \frac{2\eta_{t}}{n}\sum_{i=1}^{n}\norm{g^{i}(t)}^{2}.
\end{align*}
The strong convexity assumption implies
\begin{align*}
    f(\bar{x}(t))-f(x)\leq \langle \nabla f(\bar{x}(t)), \bar{x}(t)-x \rangle -\frac{\alpha}{2}\norm{\bar{x}(t)-x}^2.
\end{align*}
Combining the last two displays and taking conditional expectations from both sides we get
\begin{align}\nonumber
    \mathbb{E}\big[f(\bar{x}(t))-f(x)|\mathcal{F}_{t}\big] &\leq \norm{\mathbb{E}\big[\bar{g}(t)| \mathcal{F}_{t}\big] - \nabla f(\bar{x}(t))}\norm{\bar{x}(t)-x} + \frac{1}{2\eta_{t}}\mathbb{E}\big[a_{t} - a_{t+1}| \mathcal{F}_{t}\big]\\
    \label{ket0}& \quad +\frac{2\eta_{t}}{n}\sum_{i=1}^{n}\mathbb{E}\big[\norm{g^{i}(t)}^{2}|\mathcal{F}_{t}\big] -\frac{\alpha}{2}a_{t}+\frac{1}{\eta_{t}}\mathbb{E}\big[\langle\bar{z}(t),\bar{x}(t)-x\rangle|\mathcal{F}_{t}\big],
\end{align}
where $a_{t} = \norm{\bar{x}(t)-x}^2$. 

The first term in right hand side of (\ref{ket0}) is bounded as follows
\begin{align}\nonumber
   & \norm{\mathbb{E}\big[\bar{g}(t)| \mathcal{F}_{t}\big] - \nabla f(\bar{x}(t))}\norm{\bar{x}(t)-x} \leq \bigg[\norm{\mathbb{E}\big[\bar{g}(t)| \mathcal{F}_{t}\big] - \frac{1}{n}\sum_{i=1}^{n}\nabla f_{i}(x^{i}(t))} 
   \\
   \nonumber&\quad\quad+ \norm{\frac{1}{n}\sum_{i=1}^{n}\nabla f_{i}(x^{i}(t)) - \frac{1}{n}\sum_{i=1}^{n}\nabla f_{i}(\bar{x}(t))}\bigg] \norm{\bar{x}(t)-x}
   \\
   \label{harch0}& \quad \quad\leq 
    \kappa_{\beta}L\sqrt{d}h_{t}^{\beta-1}\norm{\bar{x}(t)-x}+\frac{\bar{L}}{n}\sum_{i=1}^{n}\norm{x^{i}(t)-\bar{x}(t)}\norm{\bar{x}(t)-x},
\end{align}
where the last inequality is due to Lemma \ref{lem1new} and Assumption \ref{nat}(ii).
We now decouple the terms in \eqref{harch0} using the fact that  $ab\le \frac{a^2}{v}+\frac{ v b^2}{4}$, $\forall a, b\ge 0, v>0$. Thus, we obtain
\begin{align}\label{yout0}
    \kappa_{\beta}L\sqrt{d}h_{t}^{\beta-1}\norm{\bar{x}(t)-x}\leq \frac{(\kappa_{\beta}L)^{2}}{\alpha}dh_{t}^{2(\beta-1)}+\frac{\alpha}{4}\norm{\bar{x}(t)-x}^{2}
\end{align}
and
\begin{align}\label{gout0}
    \frac{\bar{L}}{n}\sum_{i=1}^{n}\norm{x^{i}(t)-\bar{x}(t)}\norm{\bar{x}(t)-x}\leq \frac{\bar{L}t\alpha(1-\rho)}{n}\sum_{i=1}^{n}\norm{x^{i}(t)-\bar{x}}^{2} + \frac{\bar{L}\mathcal{K}^{2}}{4 t\alpha(1-\rho) }.
\end{align}
Combining (\ref{yout0}) and (\ref{gout0}) with \eqref{harch0}  gives
\begin{align}\nonumber
    \norm{\mathbb{E}\big[\bar{g}(t)| \mathcal{F}_{t}\big] - \nabla f(\bar{x}(t))}\norm{\bar{x}(t)-x}\leq &\frac{(\kappa_{\beta}L)^{2}}{\alpha}dh_{t}^{2(\beta-1)}+\frac{\alpha}{4}\norm{\bar{x}(t)-x}^{2}+
    \\
    \label{sc98}&+\frac{\bar{L}t\alpha(1-\rho)}{n}\sum_{i=1}^{n}\norm{x^{i}(t)-\bar{x}(t)}^{2} + \frac{\bar{L}\mathcal{K}^{2}}{4 t\alpha(1-\rho) }. 
\end{align}
Next, we have
\begin{align}\nonumber
    \frac{1}{\eta_{t}}\langle \bar{z}(t), \bar{x}(t) - x\rangle &= \frac{1}{n\eta_{t}}\sum_{i=1}^{n} \langle z^{i}(t), \bar{x}(t) - x\rangle\\ \label{sc1}&\leq
    \frac{1}{n\eta_{t}}\sum_{i=1}^{n} \langle z^{i}(t), \bar{x}(t) - \big(x^{i}(t) - \eta_{t}g^{i}(t)\big)\rangle + \langle z^{i}(t),  \big(x^{i}(t) - \eta_{t}g^{i}(t)\big)-x\rangle.
\end{align}
Since $\proj_{\com}(\cdot)$ is the Euclidean projection on the convex set $\com$, for any $w \in \mathbb{R}^{d}, x \in \com$ we have $\langle \proj_{\com}(w) - w, \proj_{\com}(w) - x\rangle \leq 0$, 
which implies
\begin{align*}
    \langle \proj_\com (w) - w, w - x\rangle  = -\norm{\proj_{\com}(w) -w}^{2} +\langle\proj_{\com}(w) - w, \proj_{\com}(w) - x\rangle \leq 0. 
\end{align*}
Therefore,
\begin{align*}
    \langle z^{i}(t), x^{i} - \eta_{t}g^{i}(t)-x\rangle &= \langle \proj_\com (x^{i}(t) - \eta_{t}g^{i}(t)) - (x^{i}(t) - \eta_{t}g^{i}(t)) , x^{i}(t) - \eta_{t}g^{i}(t) - x \rangle \leq 0.
\end{align*}
Applying this inequality in (\ref{sc1}) and using \eqref{zit} we find
\begin{align}\nonumber
    \frac{1}{\eta_{t}}\langle \bar{z}(t), \bar{x}(t) - x\rangle &\leq \frac{1}{n\eta_{t}}\sum_{i=1}^{n} \langle z^{i}(t),\big( \bar{x}(t) - x^{i}(t)\big) + \eta_{t}g^{i}(t)\rangle 
    \\
    \nonumber&
    \leq \frac{1}{n\eta_{t}}\sum_{i=1}^{n} \norm{z^{i}(t)}\norm{x^{i}(t) - \bar{x}(t)} + \frac{1}{n}\sum_{i=1}^{n}\norm{z^{i}(t)}\norm{g^{i}(t)}
    \\
    \nonumber&
    \leq
    \frac{1}{2n\eta_{t}}\sum_{i=1}^{n}\Big[\frac{\eta_{t}^{2}\norm{g^{i}(t)}^{2}}{1-\rho} + (1-\rho)\norm{x^{i} - \bar{x}(t)}^{2}\Big]+\frac{\eta_{t}}{n}\sum_{i=1}^{n}\norm{g^{i}(t)}^{2}
    \\\label{sc4}&
    \le
    \frac{3\eta_{t}}{2(1-\rho)n}\sum_{i=1}^{n}\norm{g^{i}(t)}^{2}+\frac{1-\rho}{2n\eta_{t}}\sum_{i=1}^{n}\norm{x^{i}(t)-\bar{x}(t)}^{2}.
\end{align}
Inserting (\ref{sc4}) and (\ref{sc98}) in (\ref{ket0}) and using the fact that $\eta_t=\frac{2}{\alpha t}$ we get
\begin{align}\nonumber
    \mathbb{E}[f(\bar{x}(t)) - f(x)|\mathcal{F}_{t}]&\leq \frac{1}{2\eta_{t}}\mathbb{E}[a_{t}-a_{t+1}|\mathcal{F}_{t}] - \frac{\alpha}{4}a_{t}
    \\
    \nonumber&\quad \quad
    +\frac{(1+4\bar{L})t\alpha(1-\rho)}{4n}\sum_{i=1}^{n} \norm{x^{i}-\bar{x}(t)}^{2} +
    \\\nonumber&\quad \quad +\frac{7\eta_{t}}{2(1-\rho)n}\sum_{i=1}^{n}\mathbb{E}[\norm{g^{i}(t)}^{2}|\mathcal{F}_{t}]+\frac{(\kappa_{\beta}{L})^{2}}{\alpha}dh_{t}^{2(\beta-1)}+\frac{\bar{L}\mathcal{K}^{2}}
    {4t\alpha(1-\rho)}.
\end{align}
where the last inequality follows from.  Taking the expectations, setting $r_{t}: = \mathbb{E}[a_{t}]$ and applying Lemma \ref{lem2new} we get
\begin{align}\label{eq:final}
    \mathbb{E}\big[f(\bar{x}(t))-f(x)\big] &\leq \frac{r_{t}-r_{t+1}}{2\eta_{t}} - \frac{\alpha}{4}r_{t} +\frac{(1+4\bar{L})t\alpha(1-\rho)}{4}\Delta(t)+
    \\ &\quad \quad \nonumber
    +\frac{7}{\alpha(1-\rho) t}\Big(9\kappa {G^2}+d\Big(\frac{9h_{t}^{2}\kappa\bar{L}^{2}}{8}+\frac{3\kappa \sigma^{2}}{2h_{t}^{2}}\Big)\Big)
    \\ &\quad \quad \nonumber
    +\frac{(\kappa_{\beta}{L})^{2}}{\alpha}dh_{t}^{2(\beta-1)}+\frac{\bar{L}\mathcal{K}^{2}}{4t\alpha(1-\rho)}.
\end{align}
Notice that since $\eta_{t} = \frac{2}{\alpha t}$ we have
$$
\sum_{t=1}^{T_{0}}\left(\frac{r_{t}-r_{t+1}}{2\eta_{t}} - \frac{\alpha}{4}r_{t}\right) \le 0.
$$
Thus, recalling that $h_{t} = t^{-\frac{1}{2\beta}}$ and summing over $t$ we get
\begin{align}\nonumber
    \sum_{t=1}^{T_{0}}\mathbb{E}\big[f(\bar{x}(t))-f(x)\big]&\leq \frac{(1+4\bar{L})\alpha(1-\rho)}{4}\sum_{t=1}^{T_{0}}t  \Delta(t) + \mathcal{B}_{1}\frac{d}{\alpha}T_{0}^{\frac{1}{\beta}} + \frac{\bar{L}\mathcal{K}^2}{4\alpha(1-\rho)}\big(\log(T_{0})+1\big),
\end{align}
where $\mathcal{B}_1 = 7\beta\Big(\frac{9\kappa {G^2}}{d}+(\frac{9\kappa \bar{L}^{2}}{8}+\frac{3\kappa \sigma^{2}}{2})\Big)+\beta(\kappa_{\beta}{L})^{2}$. Finally, using Lemma \ref{garnier} we obtain
\begin{align*}
    \sum_{t=1}^{T_{0}}\mathbb{E}\big[f(\bar{x}(t))-f(x)\big] \leq \mathcal{B}_{1}\frac{d}{\alpha}T_{0}^{\frac{1}{\beta}}+\mathcal{B}_{2}\frac{\rho^{2}}{1-\rho}\frac{d}{\alpha}T_{0}^{\frac{1}{\beta}}+\frac{\mathcal{B}_{3}}{\alpha(1-\rho)}\big(\log(T_{0})+1\big),
\end{align*}
where $\mathcal{B}_{2} =\frac{\beta(1+4\bar{L})}{4}\mathcal{A}$, and $\mathcal{B}_{3} = \bar{L}\mathcal{K}^{2}$. This proves the first bound of the theorem. 
The second bound \eqref{main-opt-error} follows immediately by the convexity of $f$.
\end{proof}
\local*
\begin{proof} In contrast to the previous proofs, now we have $\eta_{t} = \frac{4}{\alpha (t+1)}$ rather than $\eta_{t} = \frac{2}{\alpha t}$. 
	
1$^\circ$. Inspection of the proof of Lemma \ref{garnier} immediately yields that Lemma \ref{garnier}	remains valid with  $\eta_{t} = \frac{4}{\alpha (t+1)}$ instead of  $\eta_{t} = \frac{2}{\alpha t}$, up to a change in constant $\mathcal{A}$. Thus,
\begin{align}\label{eq:garnier2}
\Delta(t)&\leq \bar{\mathcal{A}}\left(\frac{\rho}{1-\rho}\right)^2\frac{d}{\alpha^{2}}t^{-\frac{2\beta-1}{\beta}},\\
\mathbb{E}\big[\|\hat x^i(t)-\bar x(t)\|^2\big]&\leq \bar{\mathcal{A}} n \left(\frac{\rho}{1-\rho}\right)^2\frac{d}{\alpha^{2}}t^{-\frac{2\beta-1}{\beta}}, \quad i=1,\dots,n, \label{eq:garnier3}
\end{align}
where $\bar{\mathcal{A}}>0$ is a constant independent of $t, d, \alpha, n, \rho$. 

2$^\circ$. Next, we show that, up to changes in constants $\mathcal{B}_i$, the bound \eqref{main-opt-error} of Theorem \ref{ert} remains valid with $\eta_{t} = \frac{4}{\alpha (t+1)}$ instead of  $\eta_{t} = \frac{2}{\alpha t}$ 
if we replace $\hat x(T_0)$ in \eqref{main-opt-error} by the estimator 
$$
\hat x_{\star}(T_0): = \frac{2}{T_{0}(T_0+1)}\sum_{t=1}^{T_{0}}t{\bar x}(t).
$$
Indeed, repeating the proof of Theorem \ref{ert}  until \eqref{eq:final}, multiplying both sides of \eqref{eq:final} by $t$, summing up from $t=1$ to $T_0$ and using the fact that 
$$
\sum_{t=1}^{T_{0}}\left(\frac{t(r_{t}-r_{t+1})}{2\eta_{t}} - \frac{\alpha}{4}t r_{t}\right) \le 0\qquad  \text{if}\ \eta_{t} = \frac{4}{\alpha (t+1)},
$$
we find that, for all $x\in \com$,
\begin{align}\nonumber
\sum_{t=1}^{T_{0}}t\,\mathbb{E}\big[f(\bar{x}(t))-f(x)\big]&\leq \frac{(1+4\bar{L})\alpha(1-\rho)}{4}\sum_{t=1}^{T_{0}}t^2  \Delta(t) + \bar{\mathcal{B}}_{1}\frac{d}{\alpha}T_{0}^{1+\frac{1}{\beta}} + \frac{\bar{L}\mathcal{K}^2}{4\alpha(1-\rho)},
\end{align}
where $\bar{\mathcal{B}}_{1}$ is a positive constant independent of $T_0, d,\alpha, n, \rho$. Using \eqref{eq:garnier2}  we get, for all $x\in \com$,  
\begin{align}\nonumber
\frac{2}{T_{0}(T_0+1)}\sum_{t=1}^{T_{0}}t\,\mathbb{E}\big[f(\bar{x}(t))-f(x)\big]&\leq \bar{\mathcal{B}}_{2} \frac{d}{\alpha(1-\rho)}T_{0}^{-1+\frac{1}{\beta}},
\end{align} 
where $\bar{\mathcal{B}}_{2}$ is a positive constant independent of $T_0, d,\alpha, n, \rho$. In view of the convexity of $f$, it follows that 
\begin{align}\nonumber
\mathbb{E}\big[f(\hat x_{\star}(T_0))-f(x^*)\big]&\leq \bar{\mathcal{B}}_{2} \frac{d}{\alpha(1-\rho)}T_{0}^{-1+\frac{1}{\beta}}.
\end{align}
As $f$ is strongly convex we also have
\begin{align}\label{eq:garnier4}
\mathbb{E}\big[\|\hat x_{\star}(T_0)-x^*\|^2\big]&\leq 2\bar{\mathcal{B}}_{2} \frac{d}{\alpha^2(1-\rho)}T_{0}^{-1+\frac{1}{\beta}}.
\end{align}
On the other hand, convexity of function $\|\cdot\|^2$ implies that 
\begin{align}\nonumber
\|\hat x^i(T_0) - \hat x_{\star}(T_0)\|^2&= \Big\| \frac{2}{T_{0}(T_0+1)}\sum_{t=1}^{T_{0}}t(x^i(t) - \bar x (t))\Big\|^2 
\\
& \leq \frac{2}{T_{0}(T_0+1)}\sum_{t=1}^{T_{0}}t  \|x^i(t) - \bar x (t)\|^2. \label{eq:garnier5}
\end{align}
Combining  \eqref{eq:garnier3} and \eqref{eq:garnier5} we obtain 
\begin{align}\label{eq:garnier6}
\mathbb{E}\big[\|\hat x^i(T_0) - \hat x_{\star}(T_0)\|^2\big]&\leq \bar{\mathcal{C}} n \left(\frac{\rho}{1-\rho}\right)^2\frac{d}{\alpha^{2}}T_0^{-\frac{2\beta-1}{\beta}},
\end{align}
where $\bar{\mathcal{C}}>0$ is a constant independent of $T_0, d, \alpha, n, \rho$. The desired result now follows from \eqref{eq:garnier4}, \eqref{eq:garnier6} and the fact that $\|\hat x^i(T_0)-x^*\|$ is trivially bounded by the diameter of $\com$. 

\end{proof}

\section{Proofs for Section \ref{sec6}}\label{appb2}
%
We first restate the following three lemmas from \cite{akhavan2020}.
\begin{lemma}\label{biss1}
Let for $\beta=2$, Assumptions \ref{nat} and \ref{ass-last} hold. Let $\bar{g}(t)$ be the average of gradient estimators for $n$ agents defined each by \eqref{eq:kkkk}, and $h=h_t$. If $\max_{x \in \com}\norm{\nabla f_{i}(x)}\leq G$, for $1\leq i \leq n$, then
$$\mathbb{E}[\|\bar{g}(t)\|^2]\leq 9\kappa \Big(G^{2}d+\frac{L^{2}d^{2}h_{t}^{2}}{2}\Big)+\frac{3\kappa d^{2}\sigma^{2}}{2h_{t}^{2}}.$$
\end{lemma}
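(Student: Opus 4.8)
The plan is to reduce the claim to a single-agent second-moment bound and then average over the agents. Since $\bar{g}(t)=\frac1n\sum_{i=1}^n g^i(t)$ and the map $x\mapsto\|x\|^2$ is convex, Jensen's inequality gives $\|\bar{g}(t)\|^2\le \frac1n\sum_{i=1}^n\|g^i(t)\|^2$, so that $\mathbb{E}[\|\bar{g}(t)\|^2]\le \frac1n\sum_{i=1}^n\mathbb{E}[\|g^i(t)\|^2]$. It therefore suffices to bound $\mathbb{E}[\|g^i(t)\|^2]$ by the right-hand side uniformly in $i$. Note that this route deliberately discards any cancellation between agents, which is exactly why the resulting bound carries no $1/n$ improvement; the averaged estimator is controlled here only in the worst case where all $g^i(t)$ coincide.

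First I would fix an agent $i$ and argue conditionally on $\mathcal{F}_t$, so that $x^i(t)\in\com$ is treated as a fixed point and the randomness comes only from $\zeta_i(t)$ (independent of $\mathcal{F}_t$ and of the noise by Assumption \ref{ass-last}(i)) together with $\xi_i(t),\xi_i'(t)$. Since $\|\zeta_i(t)\|=1$, definition \eqref{eq:kkkk} yields
\[
\|g^i(t)\|^2=\frac{d^2}{4h_t^2}\Big(f_i(x^i(t)+h_t\zeta_i(t))-f_i(x^i(t)-h_t\zeta_i(t))+\xi_i(t)-\xi_i'(t)\Big)^2 .
\]
I would then expand the symmetric difference as $2h_t\langle\nabla f_i(x^i(t)),\zeta_i(t)\rangle + R$, where the even-order term cancels and, by the $\beta=2$ Hölder / $2$-smoothness property in Assumption \ref{nat}, the remainder satisfies $|R|\le c\,L\,h_t^2$ for an absolute constant $c$. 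Applying the elementary inequality $(a+b+c)^2\le 3(a^2+b^2+c^2)$ then splits the square into a gradient contribution, a curvature contribution, and a noise contribution, mirroring the computation behind Lemma \ref{lem2new}.

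Finally I would take expectations of the three pieces separately. For the gradient piece I use the spherical integration identity $\mathbb{E}[\langle v,\zeta_i(t)\rangle^2]=\|v\|^2/d$ valid for any fixed $v$, so that $\mathbb{E}[\langle\nabla f_i(x^i(t)),\zeta_i(t)\rangle^2]\le G^2/d$; the prefactor $d^2/h_t^2$ then collapses this term to order $dG^2$. The curvature piece gives $R^2\le c^2L^2h_t^4$, which after multiplication by $d^2/h_t^2$ produces a term of order $L^2 d^2 h_t^2$. For the noise piece, since we do \emph{not} assume zero mean or independence of $\xi_i(t),\xi_i'(t)$, I bound $\mathbb{E}[(\xi_i(t)-\xi_i'(t))^2]\le 2\mathbb{E}[\xi_i(t)^2]+2\mathbb{E}[\xi_i'(t)^2]\le 4\sigma^2$ using Assumption \ref{ass-last}(ii), which after the $d^2/h_t^2$ factor gives the term of order $d^2\sigma^2/h_t^2$. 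Collecting the three contributions with the explicit numerical factors and substituting back into the averaged bound of the first step yields the stated inequality.

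The hard part will not be any single estimate but the bookkeeping that keeps the adversarial-noise term clean. Because $\xi_i(t)$ and $\xi_i'(t)$ may be correlated and biased, one cannot rely on the cross term $\mathbb{E}[\langle\nabla f_i(x^i(t)),\zeta_i(t)\rangle\,(\xi_i(t)-\xi_i'(t))]$ vanishing, so the whole argument must pass through the crude $(a+b+c)^2\le 3(a^2+b^2+c^2)$ split rather than an exact bias/variance decomposition. The independence of $\zeta_i(t)$ from the noise in Assumption \ref{ass-last}(i) is precisely what licenses factoring the spherical expectation out of the gradient piece once this split has been made, and the uniform bound $\|\nabla f_i(x)\|\le G$ on $\com$ is what lets the conditioning on $\mathcal{F}_t$ be removed without further effort.
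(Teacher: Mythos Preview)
Your approach is correct and is the standard one; note that the paper does not actually prove this lemma but restates it from \cite{akhavan2020}, so the closest in-paper analogue to compare against is the proof of Lemma~\ref{lem2new}, which you rightly identify as the template.

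One bookkeeping point prevents your sketch from yielding the stated constants. With the single three-term split you describe ($a=2h_t\langle\nabla f_i,\zeta_i\rangle$, $b=R$, $c=\xi_i-\xi_i'$), the noise contribution becomes $\tfrac{3d^2}{4h_t^2}\cdot\mathbb{E}[(\xi_i-\xi_i')^2]\le \tfrac{3d^2}{4h_t^2}\cdot 4\sigma^2=\tfrac{3d^2\sigma^2}{h_t^2}$, which overshoots the claimed $\tfrac{3\kappa d^2\sigma^2}{2h_t^2}$ by a factor of two (the $\kappa$ here should be read as $1$ for the spherical estimator \eqref{eq:kkkk}, which involves no kernel). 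To reproduce the constants exactly you should mirror the proof of Lemma~\ref{lem2new} more literally and use a \emph{two-stage} split: first apply $(A+\xi_i-\xi_i')^2\le 3(A^2+\xi_i^2+(\xi_i')^2)$ with $A=f_i(x^i(t)+h_t\zeta_i)-f_i(x^i(t)-h_t\zeta_i)$, which produces the factor $3$ on the noise and gives $\tfrac{3d^2\sigma^2}{2h_t^2}$; then bound $A^2\le 3\big(4h_t^2\langle\nabla f_i,\zeta_i\rangle^2+R_+^2+R_-^2\big)$ with $|R_\pm|\le Lh_t^2$, producing the factor $9$ on the gradient and curvature pieces and yielding $9dG^2$ and $\tfrac{9L^2d^2h_t^2}{2}$ after the spherical identity $\mathbb{E}\langle v,\zeta_i\rangle^2=\|v\|^2/d$. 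Your remaining steps (conditioning on $\mathcal{F}_t$, Jensen over agents) are fine as written.
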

Introduce the notation $$\hat{f}_{t}(x) = \mathbb{E}f(x+h_{t}\tilde{\zeta}), \quad \quad \forall x \in \mathbb{R}^{d},$$
and 
$$\hat{f}^{i}_{t}(x) = \mathbb{E}f_{i}(x+h_{t}\tilde{\zeta}), \quad \quad \forall x \in \mathbb{R}^{d}.$$
\begin{lemma}\label{bis:yy}
Suppose $f_i$ is differentiable. For the conditional expectation given $\mathcal{F}_{t}$, we have 
$$\mathbb{E}[g^{i}(t)|\mathcal{F}_{t}]=\nabla \hat{f}^{i}_{t}(x^{i}(t)).$$
\end{lemma}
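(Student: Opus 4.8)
The plan is to compute the conditional expectation of the estimator \eqref{eq:kkkk} directly, discard the noise contributions by symmetry, and then identify the remaining spherical average with the gradient of the smoothed objective $\hat{f}^{i}_{t}$ via the divergence theorem. First I would condition on $\mathcal{F}_{t}$, under which $x^{i}(t)$ is deterministic while, by Assumption \ref{ass-last}(i), the direction $\zeta_{i}(t)$ is uniform on the unit sphere and jointly independent of $\mathcal{F}_{t}$ and of the noises $\xi_{i}(t), \xi_{i}'(t)$. Expanding $y_{i}(t)-y_{i}'(t)$, the noise part of $\mathbb{E}[g^{i}(t)\mid \mathcal{F}_{t}]$ equals $\frac{d}{2h_{t}}\mathbb{E}[(\xi_{i}(t)-\xi_{i}'(t))\zeta_{i}(t)\mid \mathcal{F}_{t}]$; since each noise variable is independent of $\zeta_{i}(t)$ and $\mathbb{E}[\zeta_{i}(t)]=0$ by symmetry of the uniform law on the sphere, this term vanishes. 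Note that no zero-mean assumption on the noise is needed, only its independence from $\zeta_{i}(t)$.

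This leaves, writing $x=x^{i}(t)$ and $\zeta$ for a vector uniform on the sphere,
\[
\mathbb{E}[g^{i}(t)\mid \mathcal{F}_{t}]=\frac{d}{2h_{t}}\,\mathbb{E}\big[(f_{i}(x+h_{t}\zeta)-f_{i}(x-h_{t}\zeta))\,\zeta\big].
\]
Next I would exploit that $\zeta$ and $-\zeta$ have the same distribution to write $\mathbb{E}[f_{i}(x-h_{t}\zeta)\zeta]=-\mathbb{E}[f_{i}(x+h_{t}\zeta)\zeta]$, which collapses the difference and yields $\mathbb{E}[g^{i}(t)\mid \mathcal{F}_{t}]=\frac{d}{h_{t}}\mathbb{E}[f_{i}(x+h_{t}\zeta)\zeta]$.

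The core step is to identify this spherical average with $\nabla \hat{f}^{i}_{t}(x)$. Writing $\hat{f}^{i}_{t}(x)=\mathrm{vol}(B_{d})^{-1}\int_{B_{d}}f_{i}(x+h_{t}u)\,du$ and using $\partial_{x_{k}}f_{i}(x+h_{t}u)=h_{t}^{-1}\partial_{u_{k}}f_{i}(x+h_{t}u)$, I would differentiate under the integral sign and apply the divergence theorem componentwise, giving $\int_{B_{d}}\nabla_{u}[f_{i}(x+h_{t}u)]\,du=\int_{S^{d-1}}f_{i}(x+h_{t}u)\,u\,dS(u)$, where $u$ is the outward unit normal on the sphere. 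Dividing by $\mathrm{vol}(B_{d})$ and using the classical identity $\mathrm{area}(S^{d-1})=d\,\mathrm{vol}(B_{d})$ turns the surface integral into the uniform spherical expectation and produces exactly the factor $d/h_{t}$, so that $\nabla \hat{f}^{i}_{t}(x)=\frac{d}{h_{t}}\mathbb{E}[f_{i}(x+h_{t}\zeta)\zeta]$, matching the previous display.

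I expect the main obstacle to be the rigorous handling of this last step: justifying the interchange of gradient and integral (legitimate since $f_{i}$ is differentiable and the domain $B_{d}$ is fixed and compact) and correctly tracking the geometric constant $\mathrm{area}(S^{d-1})/\mathrm{vol}(B_{d})=d$, which is precisely what calibrates the normalization $d/(2h_{t})$ in the estimator \eqref{eq:kkkk}. This identity is the one attributed to \cite{NY1983} and recalled after the statement, so it may alternatively be invoked directly rather than re-derived.
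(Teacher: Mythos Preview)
Your argument is correct and is exactly the classical derivation traced back to \cite{NY1983} (and \cite{flaxman2004}): kill the noise using independence of $\zeta_{i}(t)$ and $\mathbb{E}[\zeta_{i}(t)]=0$, symmetrize in $\zeta$, then apply the divergence theorem on $B_{d}$ together with $\mathrm{area}(S^{d-1})=d\,\mathrm{vol}(B_{d})$. The paper itself does not prove this lemma; it restates it from \cite{akhavan2020} and only alludes to the \cite{NY1983} identity you invoke, so your proposal essentially supplies the omitted standard proof.
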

\begin{lemma}\label{bis:xx}
If $f$ is $\alpha$-strongly convex then $\hat{f}_{t}$ is $\alpha$-strongly convex. If $f \in \mathcal{F}_{2}(L)$, for any $x \in \mathbb{R}^{d}$ and $h_{t}>0$, we have
$$|\hat{f}_{t}(x)-f(x)|\leq Lh_{t}^{2},$$
and 
$$|\mathbb{E}f(x\pm h_{t}\zeta_{t})-f(x)|\leq Lh_{t}^{2}.$$
\end{lemma}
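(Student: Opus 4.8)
The plan is to prove the three assertions in turn: the strong-convexity claim follows from a stability property of the class of $\alpha$-strongly convex functions under averaging, while the two approximation bounds follow from a first-order Taylor expansion combined with the symmetry of the smoothing distribution.

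\textbf{Strong convexity of $\hat f_t$.} I would first note that strong convexity is translation invariant: if $f$ satisfies the inequality of Definition \ref{def:strongconv}, then so does each shifted function $f_v := f(\cdot + v)$, since replacing $(x,x')$ by $(x+v,x'+v)$ leaves both the linear term $\langle\nabla f(\cdot),\cdot\rangle$ and the quadratic term $\frac{\alpha}{2}\norm{\cdot}^2$ unchanged; equivalently, $f-\frac{\alpha}{2}\norm{\cdot}^2$ convex implies $f_v-\frac{\alpha}{2}\norm{\cdot}^2$ convex. Since $\hat f_t(x)=\mathbb{E}[f(x+h_t\tilde\zeta)]$ is an expectation of the functions $f_v$ with $v=h_t\tilde\zeta$, and since an expectation (indeed any convex combination) of $\alpha$-strongly convex functions is again $\alpha$-strongly convex — because $\hat f_t-\frac{\alpha}{2}\norm{\cdot}^2=\mathbb{E}\big[f(\cdot+h_t\tilde\zeta)-\frac{\alpha}{2}\norm{\cdot}^2\big]$ is an expectation of convex functions — I conclude that $\hat f_t$ is $\alpha$-strongly convex. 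If one prefers to argue directly from Definition \ref{def:strongconv}, one differentiates under the expectation, $\nabla\hat f_t(x)=\mathbb{E}[\nabla f(x+h_t\tilde\zeta)]$, which is legitimate because $\nabla f$ is continuous and $\tilde\zeta$ has compact support, and then integrates the strong-convexity inequality of each $f_v$ against the law of $\tilde\zeta$.

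\textbf{The bound $\abs{\hat f_t(x)-f(x)}\le Lh_t^2$.} For $f\in\mathcal F_2(L)$ the relevant consequence of the H\"older condition \eqref{eq:Hclass} at $\beta=2$ is the first-order Taylor bound $\abs{f(x+v)-f(x)-\langle\nabla f(x),v\rangle}\le L\norm{v}^2$. I would apply this with $v=h_t\tilde\zeta$ and take expectations over $\tilde\zeta$ uniform on $B_d$. The linear term contributes $h_t\langle\nabla f(x),\mathbb{E}[\tilde\zeta]\rangle=0$, because the uniform law on the ball $B_d$ is symmetric about the origin, so $\mathbb{E}[\tilde\zeta]=0$. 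The remaining contribution is the expected remainder, bounded pointwise by $L\norm{h_t\tilde\zeta}^2=Lh_t^2\norm{\tilde\zeta}^2\le Lh_t^2$ since $\norm{\tilde\zeta}\le 1$ on $B_d$. This yields the claim. The second bound $\abs{\mathbb{E}f(x\pm h_t\zeta_t)-f(x)}\le Lh_t^2$ is the identical computation with $\zeta_t$ uniform on the unit sphere rather than the ball: symmetry again gives $\mathbb{E}[\zeta_t]=0$, so the linear term vanishes regardless of the sign $\pm$, and since $\norm{\zeta_t}=1$ the expected remainder is again at most $Lh_t^2$.

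\textbf{Main obstacle.} The computations themselves are short; the points requiring care are the interchange of expectation with the (strong-)convexity structure — or, in the alternative route, differentiation under the integral, which is clean here thanks to continuity of $\nabla f$ and compact support of $\tilde\zeta$ — and the reason the approximation constant is exactly $L$. The latter hinges on using the \emph{first-order} Taylor expansion and on the symmetry $\mathbb{E}[\tilde\zeta]=0$ annihilating the odd term. This is precisely why the surrogate-function approach of this section is confined to $\beta=2$: for $\beta>2$ the surviving even-order terms are of order $h_t^2$, larger than the target smoothing rate $h_t^\beta$, so the clean bound would break down.
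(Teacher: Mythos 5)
Your proposal is correct and takes essentially the same route as the source: the paper gives no proof of this lemma but restates it from \cite{akhavan2020}, where the argument is exactly yours — $\hat f_t-\frac{\alpha}{2}\norm{\cdot}^2$ is an expectation of (translated) convex functions, and both approximation bounds follow from the first-order Taylor bound together with $\mathbb{E}[\tilde\zeta]=\mathbb{E}[\zeta_t]=0$ and $\norm{\tilde\zeta}\le\norm{\zeta_t}=1$. One small simplification you could note: under the paper's convention that $\floor{x}$ is the maximal integer \emph{strictly} less than $x$, we have $\floor{2}=1$, so the first-order Taylor bound with quadratic remainder is literally the definition of $\mathcal{F}_2(L)$ rather than a consequence of it.
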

\begin{lemma}\label{horn}
Let Assumptions \ref{rho}, \ref{nat}, and \ref{ass-last} hold with $\beta = 2$. Let $\com$ be a convex compact subset of $\mathbb{R}^d$, and assume that diam$(\com)\leq \mathcal{K}$. Assume that $\max_{x \in \com}\norm{\nabla f_{i}(x)}\leq G$, for $1\leq i \leq n$. Let the updates $x^i(t), \bar x(t)$ be defined by Algorithm \ref{algo}, in which the gradient estimator for $i$-th agent is defined by \eqref{eq:kkkk}, and $\eta_{t} = \frac{1}{\alpha t}$, $h_{t}=\Big(\frac{3d^{2}\sigma^{2}}{2L\alpha t +9L^{2}d^{2}}\Big)^{1/4}$. Then
\begin{align}\nonumber
    \Delta(t)\leq \Big(\frac{\rho}{1-\rho}\Big)^{2}\Big(\mathcal{A}^{'}_{1}\frac{d}{\alpha^{3/2}}t^{-\frac{3}{2}}+\mathcal{A}^{'}_{2}\frac{d^2}{\alpha^{2}}t^{-2}\Big),
\end{align}
where $\mathcal{A}_{1}^{'}$ and $\mathcal{A}_{2}^{'}$ are positive constants independent of $T, d, \alpha, n, \rho$. 
\end{lemma}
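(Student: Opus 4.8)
The plan is to mirror the proof of Lemma \ref{garnier}, since the consensus update in Algorithm \ref{algo} — and hence the contraction structure of the averaging step — is the same regardless of which gradient estimator is used; only the second-moment bound and the tuning parameters change. First I would set $V(t)=\sum_{i=1}^n\|x^i(t)-\bar x(t)\|^2$ and $z^i(t)=\proj_{\com}(x^i(t)-\eta_t g^i(t))-(x^i(t)-\eta_t g^i(t))$, and invoke Lemma \ref{lem:contraction} to obtain
\[
V(t+1)\le \rho^2\sum_{i=1}^n\bigl\|x^i(t)-\bar x(t)-\eta_t(g^i(t)-\bar g(t))+z^i(t)-\bar z(t)\bigr\|^2 .
\]
Expanding the square, using $\|z^i(t)\|\le \eta_t\|g^i(t)\|$, and decoupling the cross terms with a free parameter $\lambda>0$ exactly as in the proof of Lemma \ref{garnier} produces, after conditioning on $\mathcal{F}_t$,
\[
\mathbb{E}[V(t+1)\mid\mathcal{F}_t]\le \rho^2(1+2\lambda)V(t)+\rho^2\Big(4+\tfrac{2}{\lambda}\Big)\eta_t^2\sum_{i=1}^n\mathbb{E}[\|g^i(t)\|^2\mid\mathcal{F}_t].
\]

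The one substantive change is the variance bound: instead of Lemma \ref{lem2new} I would use the single-agent analogue of Lemma \ref{biss1}, namely $\mathbb{E}[\|g^i(t)\|^2]\le 9\kappa\bigl(G^2 d+\tfrac{L^2 d^2 h_t^2}{2}\bigr)+\tfrac{3\kappa d^2\sigma^2}{2h_t^2}$, which holds for each agent by the same computation (the average bound in Lemma \ref{biss1} is just the per-agent bound, with no $1/n$ gain). Taking expectations, choosing $\lambda=\tfrac{1-\rho}{2\rho}$ so that $\rho^2(1+2\lambda)=\rho$ and $4+\tfrac{2}{\lambda}=\tfrac{4}{1-\rho}$, and dividing by $n$ yields the recursion
\[
\Delta(t+1)\le \rho\,\Delta(t)+\frac{4\rho^2}{1-\rho}\,\eta_t^2\Big(9\kappa G^2 d+\tfrac{9\kappa L^2 d^2 h_t^2}{2}+\tfrac{3\kappa d^2\sigma^2}{2h_t^2}\Big).
\]

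Next I would insert $\eta_t=\tfrac{1}{\alpha t}$ and the prescribed $h_t$, whose square is $h_t^2=\bigl(\tfrac{3d^2\sigma^2}{2L\alpha t+9L^2 d^2}\bigr)^{1/2}$. The dominant contribution comes from $\tfrac{d^2\sigma^2}{h_t^2}=\sqrt{\tfrac{d^2\sigma^2(2L\alpha t+9L^2 d^2)}{3}}$, and the key device is the subadditivity $\sqrt{a+b}\le\sqrt a+\sqrt b$, which splits this into a piece of order $d\sqrt{\alpha t}$ and a piece of order $d^2$. Multiplying by $\eta_t^2=\tfrac{1}{\alpha^2 t^2}$ then gives exactly the two advertised orders $\tfrac{d}{\alpha^{3/2}}t^{-3/2}$ and $\tfrac{d^2}{\alpha^2}t^{-2}$; the remaining contributions, $\eta_t^2 G^2 d\le\tfrac{d^2}{\alpha^2 t^2}$ and $\eta_t^2 d^2 h_t^2$ (bounded using $h_t^2\le\tfrac{\sigma}{\sqrt3 L}$ after dropping the $t$-term in the denominator), are of no larger order in $t$ and are absorbed. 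This reduces the bias term to $\tfrac{\rho^2}{1-\rho}\bigl(c_1\tfrac{d}{\alpha^{3/2}}t^{-3/2}+c_2\tfrac{d^2}{\alpha^2}t^{-2}\bigr)$.

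Finally, since $\Delta(1)=0$ by the choice of a common initialization, unrolling gives $\Delta(t+1)\le\tfrac{\rho^2}{1-\rho}\sum_{s=1}^t\rho^{t-s}b(s)$ with $b(s)$ the two-term bias above. Each convolution $\sum_{s=1}^t\rho^{t-s}s^{-p}$, $p\in\{3/2,2\}$, is evaluated by the same split-at-$t/2$ and Chebyshev-sum-inequality argument (\cite[Theorem A.19]{DGL}) used in Lemma \ref{garnier}, bounding the geometric tail by $\tfrac{1}{1-\rho}$ and controlling the $\rho^{\lfloor t/2\rfloor}$ remainder via $\rho^k\le\tfrac{1}{\log(1/\rho)k^2}$; this produces the extra factor $\tfrac{1}{1-\rho}$, turning $\tfrac{\rho^2}{1-\rho}$ into $\bigl(\tfrac{\rho}{1-\rho}\bigr)^2$ and yielding the claimed bound, while the small cases $t\in\{1,2\}$ are handled trivially by the compactness of $\com$. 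I expect the main obstacle to be the bookkeeping in the second-moment step: verifying that, under this non-power-law $h_t$, every piece of $\eta_t^2\,\mathbb{E}[\|g^i(t)\|^2]$ collapses precisely onto the two target powers of $t$ with the correct $d$- and $\alpha$-exponents, rather than generating a spurious slower-decaying term.
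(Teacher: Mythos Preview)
Your proposal is correct and follows essentially the same route as the paper's own proof: the paper also invokes the recursion from Lemma \ref{garnier} verbatim, chooses $\lambda=\tfrac{1-\rho}{2\rho}$, replaces Lemma \ref{lem2new} by the per-agent second-moment bound of Lemma \ref{biss1}, substitutes the prescribed $\eta_t,h_t$ to collapse the driving term into the two orders $\tfrac{d}{\alpha^{3/2}}t^{-3/2}$ and $\tfrac{d^2}{\alpha^2}t^{-2}$, and then unrolls the convolution exactly as you describe. Your write-up is in fact more explicit than the paper's about the $\sqrt{a+b}\le\sqrt a+\sqrt b$ split and the absorption of the $G^2d$ and $d^2h_t^2$ pieces, but the argument is the same.
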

\begin{proof}
Similarly to Lemma \ref{garnier} we obtain
\begin{align}\nonumber
    \mathbb{E}[V(t+1)|\mathcal{F}_{t}] \leq \rho^{2}(1+2\lambda)V(t) + \rho^{2}(4 + \frac{2}{\lambda})\eta_{t}^{2}\sum_{i=1}^{n}\mathbb{E}[\norm{g^{i}(t)}^{2}|\mathcal{F}_{t}].
\end{align}
Choosing  $\lambda = \frac{1-\rho}{2\rho}$ and using Lemma \ref{biss1} we get
\begin{align*}
 \mathbb{E}[V(t+1)|\mathcal{F}_{t}] \leq \rho V(t) + \frac{4\rho^{2}}{1-\rho}\eta_{t}^{2}\Big(9(G^{2}d+\frac{L^{2}d^{2}h_{t}^{2}}{2})+\frac{3d^{2}\sigma^{2}}{2h_{t}^{2}}\Big).
\end{align*}
Taking here the expectations and setting $\eta_{t} = \frac{1}{\alpha t}$ and $h_{t}=\Big(\frac{3d^{2}\sigma^{2}}{2L\alpha t + 9{L}^{2}d^{2}}\Big)^{1/4}$ yields
\begin{align*}
 \Delta(t+1) \leq \rho \Delta(t) + \frac{\rho^{2}}{1-\rho}\Big(\mathcal{A}_{3}^{'}\frac{d}{\alpha^{3/2}t^{3/2}}+\mathcal{A}_{4}^{'}\frac{d^{2}}{\alpha^{2}t^{2}}\Big)
\end{align*}
with $\mathcal{A}_{3}^{'} = 2\sqrt{6L}\sigma$, and $\mathcal{A}_{4}^{'} = 12\sqrt{3}{L}\sigma+\frac{36G^{2}}{d}$. On the other hand, by recursion we have  
\begin{align}\nonumber
    \Delta(t+1) &\leq  \rho^{t}  \Delta(1) + \frac{\rho^{2}}{1-\rho}\frac{d}{\alpha^{3/2}}\Bigg(\mathcal{A}_{3}^{'}\sum_{s=1}^{t}s^{-\frac{3}{2}}\rho^{t-s}+\mathcal{A}_{4}^{'}\frac{d}{\alpha^{1/2}}+\sum_{s=1}^{t}s^{-2}\rho^{t-s}\Big).
\end{align}
Here $\Delta(1)=0$ due to the initialization. The sums on right hand side can be estimated by using an argument, which is quite analogous to what was done in the proof of Lemma \ref{garnier}, after equation \eqref{cb6}, leading to the result of the lemma.
\end{proof}
\begin{lemma}\label{hhm}
 Let the assumptions of Lemma \ref{horn} hold and let $f$ be an $\alpha$-strongly convex function. Then
\begin{align*}
    \mathbb{E}[\norm{\bar{x}(t)-x^*}^2]&\leq \frac{\mathcal{C}}{1-\rho}\left(\frac{d}{t^{1/2}\alpha^{3/2}}+\frac{d^2}{t\alpha^{2}}\right),
\end{align*}
where $\mathcal{C}>0$ is a constant independent of $T, d, \alpha, n, \rho$. 
\end{lemma}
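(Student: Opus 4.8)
The plan is to reduce Lemma \ref{hhm} to a one-dimensional recursion for $r_t := \mathbb{E}[\|\bar x(t)-x^*\|^2]$ and then to solve it, in the spirit of the proof of Theorem \ref{ert} but now tracking the squared distance to the minimizer rather than the regret, and replacing the true local gradients by the gradients of the surrogate functions $\hat f^i_t$. First I would record, as in that proof, that the averaged iterate obeys $\bar x(t+1)=\bar x(t)-\eta_t\bar g(t)+\bar z(t)$ because $W$ is doubly stochastic, where $\bar z(t)=n^{-1}\sum_i z^i(t)$ collects the projection corrections. Writing $a_t=\|\bar x(t)-x^*\|^2$, expanding the square, and using the projection bound $\|z^i(t)\|\le\eta_t\|g^i(t)\|$ to absorb $\|\bar z(t)\|^2+\eta_t^2\|\bar g(t)\|^2-2\eta_t\langle\bar z(t),\bar g(t)\rangle$ into $\frac{4\eta_t^2}{n}\sum_i\|g^i(t)\|^2$, I obtain a bound on $a_{t+1}$ featuring the drift $-2\eta_t\langle\bar g(t),\bar x(t)-x^*\rangle$ and the projection term $2\langle\bar z(t),\bar x(t)-x^*\rangle$, to which I apply $\mathbb{E}[\,\cdot\mid\mathcal F_t]$.

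The second step is to extract a contraction from the drift, using that the method is stochastic gradient descent on the surrogate. By Lemma \ref{bis:yy}, $\mathbb{E}[\bar g(t)\mid\mathcal F_t]=\frac1n\sum_i\nabla\hat f^i_t(x^i(t))$, which I split as $\nabla\hat f_t(\bar x(t))$ plus a discrepancy $\frac1n\sum_i(\nabla\hat f^i_t(x^i(t))-\nabla\hat f^i_t(\bar x(t)))$. For the first piece I use $\alpha$-strong convexity of $\hat f_t$ (Lemma \ref{bis:xx}) to get $\langle\nabla\hat f_t(\bar x(t)),\bar x(t)-x^*\rangle\ge \hat f_t(\bar x(t))-\hat f_t(x^*)+\frac{\alpha}{2}a_t$, and then $|\hat f_t-f|\le Lh_t^2$ together with $f(\bar x(t))\ge f(x^*)$ (valid since $\bar x(t)\in\com$) to conclude $\hat f_t(\bar x(t))-\hat f_t(x^*)\ge -2Lh_t^2$. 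The discrepancy piece is controlled by the $\bar L$-smoothness inherited by $\hat f^i_t$ from Assumption \ref{nat}(ii) via Cauchy–Schwarz, and decoupled by Young's inequality so that it consumes only part of the strong-convexity term, keeping a net contraction coefficient of order $\alpha\eta_t$ on $a_t$ plus a harmless multiple of $\Delta(t)$. The term $\frac1{\eta_t}\langle\bar z(t),\bar x(t)-x^*\rangle$ is handled exactly as in \eqref{sc4}, giving $\frac{3\eta_t}{2(1-\rho)n}\sum_i\|g^i(t)\|^2+\frac{1-\rho}{2n\eta_t}\sum_i\|x^i(t)-\bar x(t)\|^2$.

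Assembling these estimates, taking full expectations, bounding $\frac1n\sum_i\mathbb{E}\|g^i(t)\|^2$ by Lemma \ref{biss1} (the same bound holds for the per-agent average by Jensen and the identical structure of the estimators), and inserting $\eta_t=\frac1{\alpha t}$ and the prescribed $h_t$, I arrive at a recursion $r_{t+1}\le(1-\tfrac{c}{t})r_t+b_t$ with $c$ of order one and forcing $b_t\le \frac{C'}{1-\rho}\big(\frac{d}{\alpha^{3/2}}t^{-3/2}+\frac{d^2}{\alpha^2}t^{-2}\big)$; the surrogate bias contributes the $t^{-3/2}$ part, while the $\Delta(t)$ contribution is controlled through Lemma \ref{horn}, whose two-rate bound is exactly what produces the matching two-rate structure. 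Finally I would solve this recursion by telescoping the product $\prod_{k}(1-\tfrac{c}{k})$, as in the evaluation after \eqref{cb6}, which turns $r_t$ into a weighted convolution $\lesssim t^{-c}\sum_{s\le t}s^{c-1}b_s$ and yields the two advertised rates $\frac{d}{\alpha^{3/2}t^{1/2}}$ and $\frac{d^2}{\alpha^2 t}$, each carrying the factor $\frac1{1-\rho}$.

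The main obstacle, and the step requiring the most care, is this last one: with $\eta_t=\frac1{\alpha t}$ the effective contraction rate $c$ is at most $1$, which is comfortably above the exponent $\tfrac12$ governing the dominant $t^{-3/2}$ forcing but only borderline for the lower-order $t^{-2}$ forcing. The decoupling in the second step must therefore be calibrated so that the strong-convexity term is not over-consumed, and the convolution for the $t^{-2}$ term must be summed carefully so that it delivers the claimed $\frac{d^2}{\alpha^2 t}$ order rather than a strictly larger quantity. A secondary technical point is the justification that $\bar x(t)\in\com$, needed for $f(\bar x(t))\ge f(x^*)$, which follows from the convexity of $\com$ and the double stochasticity of $W$.
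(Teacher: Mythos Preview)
Your outline follows the paper's route closely: expand $a_{t+1}$, isolate the drift $-2\eta_t\langle\bar g(t),\bar x(t)-x^*\rangle$, replace $\bar g(t)$ by its conditional mean via Lemma~\ref{bis:yy}, split that mean into $\nabla\hat f_t(\bar x(t))$ plus a network discrepancy, control the projection cross term exactly as in \eqref{sc4}, bound the second moments with Lemma~\ref{biss1}, insert Lemma~\ref{horn} for $\Delta(t)$, and solve the resulting one-step recursion for $r_t$. The paper closes by invoking \cite[Lemma~D.1]{akhavan2020} rather than by an explicit convolution estimate, but that is cosmetic.

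The substantive gap is precisely the one you flag as ``the main obstacle''. By lower-bounding $\langle\nabla\hat f_t(\bar x(t)),\bar x(t)-x^*\rangle$ through the function-value inequality $\hat f_t(\bar x(t))-\hat f_t(x^*)+\tfrac{\alpha}{2}a_t$ you extract only $\tfrac{\alpha}{2}$, hence at best $c=1$ in $r_{t+1}\le(1-\tfrac{c}{t})r_t+b_t$ once $\eta_t=\tfrac{1}{\alpha t}$, and strictly $c<1$ if you spend any of it on the Young decoupling of the discrepancy. With $c\le 1$, the forcing $b_t\asymp t^{-2}$ produces $r_t\asymp t^{-1}\log t$, not $t^{-1}$; so your plan, as written, cannot deliver the claimed $d^2/(\alpha^2 t)$ term without a logarithm. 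The paper avoids this in two places. First, it uses the monotonicity form of strong convexity to assert directly $\langle\nabla\hat f_t(\bar x(t)),\bar x(t)-x^*\rangle\ge \alpha a_t$, which yields the contraction factor $(1-\tfrac{2}{t})$, i.e.\ $c=2$, comfortably above the critical exponent for the $t^{-2}$ forcing. Second, it decouples the discrepancy cross term with a $t$-dependent splitting and then replaces the resulting $a_t$ by the diameter bound $a_t\le\mathcal K^2$, so no contraction is consumed at all; the discrepancy contributes only to the forcing. If you want your argument to go through as stated, you should replace the function-gap lower bound by the stronger inequality the paper uses (or otherwise secure $c>1$), and decouple the discrepancy without touching the $a_t$-coefficient.
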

\begin{proof} 
First note that due to the strong convexity assumption we have
$$\norm{\bar{x}(1)-x^*}^2\leq \frac{G^{2}}{\alpha^{2}}.$$
Therefore, for $t=1$ the result holds. For $t \geq 2$, by the definition of the algorithm we have
\begin{align*}
    \norm{\bar{x}(t+1)-x^*}^2 &\leq \norm{\bar{x}(t)-x^*}^{2}+\eta_{t}^{2}\norm{\bar{g}(t)}^2+\norm{\bar{z}(t)}^{2}-2\eta_{t}\langle \bar{g}(t),\bar{z}(t)\rangle-\\&\quad\quad - 2\eta_{t}\langle \bar{g}(t), \bar{x}(t)-x^*\rangle + 2 \langle \bar{x}(t)-x^*,\bar{z}(t)\rangle.
\end{align*}
Taking conditional expectations we get
\begin{align}\label{kwe}
    \mathbb{E}[a_{t+1}|\mathcal{F}_{t}] &\leq a_{t}+\frac{2\eta_{t}^{2}}{n}\sum_{i=1}^{n}\mathbb{E}[\norm{g^{i}(t)}^2|\mathcal{F}_{t}] -2\eta_{t}\mathbb{E}[\langle \bar{g}(t),\bar{z}(t)\rangle|\mathcal{F}_{t}]-\\\label{kwe1}&\quad\quad- 2\eta_{t}\mathbb{E}[\langle \bar{g}(t), \bar{x}(t)-x^*\rangle|\mathcal{F}_{t}] + 2\mathbb{E}[\langle \bar{x}(t)-x^*,\bar{z}(t)\rangle|\mathcal{F}_{t}],
\end{align}
where we used the fact that $\norm{z^{i}(t)}\leq \eta_{t}\norm{g^{i}(t)}$ for $1\leq i\leq n$.

For the term $- 2\eta_{t}\mathbb{E}[\langle \bar{g}(t), \bar{x}(t)-x^*\rangle|\mathcal{F}_{t}]$ in (\ref{kwe}), we have
\begin{align}\label{ap1}
    - 2\eta_{t}\mathbb{E}[\langle \bar{g}(t), \bar{x}(t)-x^*\rangle|\mathcal{F}_{t}]&\leq -\frac{2\eta_{t}}{n}\sum_{i=1}^{n}\Big(\mathbb{E}[\langle g^{i}(t) - \nabla \hat{f}^{i}_{t}(x^{i}(t)), \bar{x}(t)-x^*\rangle|\mathcal{F}_{t}]+\\\label{ap2}&\quad\quad+ \langle \nabla \hat{f}^{i}_{t}(x^{i}(t)) - \nabla \hat{f}^{i}_{t}(\bar{x}(t)), \bar{x}(t)-x^*\rangle+\\\label{ap3}&\quad\quad +\langle \nabla \hat{f}_{t}(\bar{x}(t)), \bar{x}(t)-x^*\rangle \Big)
\end{align}
For the term in (\ref{ap1}), by Lemma \ref{bis:yy} we have 
$$-\frac{2\eta_{t}}{n}\sum_{i=1}^{n}\mathbb{E}[\langle g^{i}(t) - \nabla \hat{f}^{i}_{t}(x^{i}(t)), \bar{x}(t)-x^*\rangle|\mathcal{F}_{t}]=0.$$
For the term in  (\ref{ap2}), decoupling yields
\begin{align*}
-\frac{2\eta_{t}}{n}\sum_{i=1}^{n}\langle \nabla \hat{f}^{i}_{t}(x^{i}(t)) - \nabla \hat{f}^{i}_{t}(\bar{x}(t)), \bar{x}(t)-x^*\rangle&\leq \frac{\eta_{t}t\alpha}{n}(1-\rho)V(t)+\frac{\bar{L}^2\eta_{t}}{t\alpha}\frac{1}{1-\rho}a_{t}.
\end{align*}
Next, we use the strong convexity (cf. Lemma \ref{bis:xx}) to handle (\ref{ap3}):
\begin{align*}
    -2\eta_{t} \langle\nabla \hat{f}_{t}(\bar{x}(t)),\bar{x}(t)-x^*\rangle\leq -2\eta_{t}\alpha a_{t}.
\end{align*}
Finally, for the term containing $2\langle \bar{x}(t)-x^{*}, \bar{z}(t)\rangle$ in (\ref{kwe1})  we obtain similarly to (\ref{sc4}) that
\begin{align*}
    2\mathbb{E}[\langle \bar{x}(t)-x^{*}, \bar{z}(t)\rangle|\mathcal{F}_{t}]\leq\frac{3\eta_{t}^{2}}{(1-\rho)n}\sum_{i=1}^{n}\mathbb{E}[\norm{g^{i}(t)}^{2}|\mathcal{F}_{t}]+\frac{1-\rho}{n}V(t).
\end{align*}
Combining the above inequalities yields
\begin{align*}
    \mathbb{E}[a_{t+1}|\mathcal{F}_{t}]&\leq (1-2\eta_{t}\alpha) a_{t} + \frac{2\eta_{t}^{2}}{n}\sum_{i=1}^{n}\mathbb{E}[\norm{\bar{g}(t)}^{2}|\mathcal{F}_{t}]-2\eta_{t}\mathbb{E}[\langle \bar{g}(t),\bar{z}(t)\rangle|\mathcal{F}_{t}]+\frac{\eta_{t}\bar{L}^{2}\mathcal{K}^2}{t\alpha(1-\rho)}+\\&\quad\quad+\frac{\eta_{t}t\alpha + 1}{n}(1-\rho)V(t)+\frac{3\eta_{t}^{2}}{(1-\rho)n}\sum_{i=1}^{n}\mathbb{E}[\norm{g^{i}(t)}^{2}|\mathcal{F}_{t}].
\end{align*}
Now, recalling that $\eta_{t}=\frac{1}{t\alpha}$, $h_{t}=\Big(\frac{3d^{2}\sigma^{2}}{2L\alpha t +9L^{2}d^{2}}\Big)^{1/4}$, taking the expectations and applying Lemma \ref{biss1} we find
\begin{align}\label{lnl}
    r_{t+1}\leq \Big(1-\frac{2}{t}\Big)r_{t}
    +2(1-\rho)\Delta(t)+\frac{C}{(1-\rho)}\Big(\frac{d}{t^{3/2}\alpha^{3/2}}+\frac{d^2}{t^{2}\alpha^{2}}\Big),
\end{align}
where $r_{t}=\mathbb{E}[a_{t}]$, and $C>0$ is a constant independent of  $T, d, \alpha, n, \rho$. 
Using Lemma \ref{horn} to bound $\Delta(t)$ in (\ref{lnl}) we get
$$
r_{t+1}\leq \Big(1-\frac{2}{t}\Big)r_{t}
    +\frac{C'}{(1-\rho)}\Big(\frac{d}{t^{3/2}\alpha^{3/2}}+\frac{d^2}{t^{2}\alpha^{2}}\Big),
$$
where $C'>0$ is a constant independent of  $T, d, \alpha, n, \rho$.
The desired result follows from this recursion by applying \cite[Lemma D.1]{akhavan2020}.
\end{proof}
\ib*
\begin{proof}
Fix $x \in \com$. Due to the $\alpha$-strong convexity of $\hat{f}_{t}$, we have
$$\hat{f}_{t}(\bar{x}(t))-\hat{f}_{t}(x^{*})\leq \langle \nabla \hat{f}_{t}(\bar{x}(t)), \bar{x}(t)-x^{*}\rangle - \frac{\alpha}{2}\norm{\bar{x}(t)-x^{*}}^{2}.$$
Thus, by Lemma \ref{bis:xx} we get
\begin{align*}
    f(\bar{x}(t))-f(x^{*})\leq 2Lh_{t}^{2}+\langle\nabla \hat{f}_{t}(\bar{x}(t)),\bar{x}(t)-x^{*}\rangle-\frac{\alpha}{2}\norm{\bar{x}(t)-x^{*}}^{2}.
\end{align*}
Let $a_{t} = \norm{\bar{x}(t)-x^{*}}^{2}$. Taking conditional expectations and applying Lemma \ref{bis:yy} we obtain
\begin{align}\nonumber
    \mathbb{E}[f(\bar{x}(t))-f(x^{*})|\mathcal{F}_{t}]&\leq 2Lh_{t}^{2}+\frac{1}{n}\sum_{i=1}^{n}\langle\nabla \hat{f}^{i}_{t}(\bar{x}(t))-\nabla \hat{f}^{i}_{t}(x^{i}(t)),\bar{x}(t)-x^{*}\rangle-\frac{\alpha}{2}a_{t} 
    \\\nonumber&\quad\quad+ \mathbb{E}[\langle \bar{g}(t), \bar{x}(t) - x^{*} \rangle|\mathcal{F}_{t}]\\\nonumber &\leq 2Lh_{t}^{2}+\frac{1}{n}\sum_{i=1}^{n}\mathbb{E}[\langle\nabla \hat{f}^{i}_{t}(\bar{x}(t))-\nabla \hat{f}^{i}_{t}(x^{i}(t)),\bar{x}(t)-x^{*}\rangle|\mathcal{F}_{t}]
     \\\nonumber&\quad\quad
     -\frac{\alpha}{2}a_{t}+\frac{a_{t}-\mathbb{E}[a_{t+1}|\mathcal{F}_{t}]}{2\eta_{t}}
    \\\label{dm}&\quad\quad+\frac{1}{\eta_{t}}\mathbb{E}[\langle \bar{z}(t), \bar{x}(t)-x^{*} \rangle|\mathcal{F}_{t}] + \frac{2\eta_{t}}{n}\sum_{i=1}^{n}\mathbb{E}[\norm{g^{i}(t)}^{2}|\mathcal{F}_{t}],
\end{align}
where the last inequality uses the definition of the algorithm. 
Now, by decoupling we find
\begin{align}\label{fdg}
    \frac{1}{n}\sum_{i=1}^{n}\langle\nabla \hat{f}^{i}_{t}(\bar{x}(t))-\nabla \hat{f}^{i}_{t}(x^{i}(t)),\bar{x}(t)-x^{*}\rangle\leq 
    \frac{t\alpha}{2n} (1-\rho) V(t)+\frac{1}{2(1-\rho)}\frac{\bar{L}^2}{t\alpha}\mathcal{K}^{2},
\end{align}
while similarly to (\ref{sc4}) we also have 
\begin{align}\label{b44}
    \frac{1}{\eta_{t}}\mathbb{E}[\langle \bar{z}(t), \bar{x}(t) - x^{*}\rangle|\mathcal{F}_{t}] \leq
    \frac{1}{1-\rho}\frac{3\eta_{t}}{2n}\sum_{i=1}^{n}\mathbb{E}[\norm{g^{i}(t)}^{2}|\mathcal{F}_{t}]+(1-\rho)\frac{1}{2n\eta_{t}}V(t).
\end{align}
Combining the above inequalities and applying Lemma \ref{biss1} yields
\begin{align}\nonumber
    \mathbb{E}[f(\bar{x}(t))-f(x^{*})|\mathcal{F}_{t}]&\leq \Big(\frac{1}{\eta_{t}}+t\alpha\Big)\frac{1-\rho}{2n} V(t)+\frac{1}{2(1-\rho)}\frac{\bar{L}^2}{t\alpha}\mathcal{K}^{2}-\frac{\alpha}{2}a_{t}+\frac{a_{t}-\mathbb{E}[a_{t+1}|\mathcal{F}_{t}]}{2\eta_{t}}+\\\label{dm0}&\quad\quad+2Lh_{t}^{2}+\left(2+\frac{3}{2(1-\rho)}\right)\frac{\eta_{t}}{n}\sum_{i=1}^{n}\mathbb{E}[\norm{g^{i}(t)}^{2}|\mathcal{F}_{t}].
\end{align}
Let $r_{t} = \mathbb{E}[a_{t}]$. 
Using the fact that $\eta_{t}=\frac{1}{\alpha t}$, $h_{t}=\Big(\frac{3d^{2}\sigma^{2}}{2L\alpha t + 9{L}^{2}d^{2}}\Big)^{1/4}$, taking the expectations in (\ref{dm0}) and applying Lemma \ref{biss1} we find
\begin{align*}
    \mathbb{E}[f(\bar{x}(t))-f(x^{*})] &\leq t\alpha\Big(\frac{r_{t}-r_{t+1}}{2}\Big)-\frac{\alpha}{2}r_{t}+ (1-\rho)\alpha t  \Delta(t)
    + \frac{C_1}{1-\rho}
    \Big(\frac{d}{\sqrt{\alpha t}}+\frac{d^{2}}{\alpha t}\Big),
\end{align*}
where $C_1>0$ is a constant independent of  $T, d, \alpha, n, \rho$.
Summing up both sides over $t$ gives
\begin{align*}
    \sum_{t=\floor{\frac{T}{2}}+1}^{T}\mathbb{E}[f(\bar{x}(t))-f(x^{*})] &\leq r_{\floor{\frac{T}{2}}+1}\frac{\floor{\frac{T}{2}}\alpha}{2}+(1-\rho)\alpha\sum_{t=\floor{\frac{T}{2}}+1}^{T}t  \Delta(t)
    + \frac{C_2}{1-\rho}
    \Big(\frac{d\sqrt{T}}{\sqrt{\alpha}}+\frac{d^{2}}{\alpha}\Big)
\end{align*}
where $C_2>0$ is a constant independent of  $T, d, \alpha, n, \rho$. We now apply Lemma \ref{horn} to bound $\Delta(t)$ and Lemma \ref{hhm} to bound $r_{\floor{\frac{T}{2}}+1}$. It follows that
\begin{align*}
    \sum_{t=\floor{\frac{T}{2}}+1}^{T}\mathbb{E}[f(\bar{x}(t))-f(x^{*})] &\leq 
    \frac{C_3}{1-\rho}\Big(\frac{d\sqrt{T}}{\sqrt{\alpha}}+\frac{d^{2}}{\alpha}\Big),
%
\end{align*}
where $C_3>0$ is a constant independent of  $T, d, \alpha, n, \rho$. The desired bound for $\mathbb{E}[f(\tilde{x}(T))-f(x^{*})] $ follows from this inequality by the convexity of $f$.

\end{proof}
{\section{Numerical Experiments}\label{nexps}}

In this section we present a numerical comparison between the proposed method and the zero-order method in \cite{akhavan2020} based on 2-point gradient estimator. Since the goal is to study the effect of the new gradient estimator, we consider the standard (undistributed) setting. 


We wish to minimize the following function $f: \mathbb{R}^d \to \mathbb{R}$,
\begin{equation}
f(x)= \frac{\alpha}{2}x^\top A x+ {Lh^{3}\sum_{i=1}^{d}\psi(h^{-1}x_{i})},
    \label{eq:testfunc}
\end{equation}
where $\alpha, {L, h}$ are positive parameters, $A$ is a positive definite matrix in $\mathbb{R}^{d\times d}$ with smallest eigenvalue equal to $1$, and ${\psi(x)} = \int_{-\infty}^{x}\int_{-\infty}^{z} \phi(t)dt dz,$
with 

$$\phi(x) = \left\{
	\begin{array}{ll}
		0  & \mbox{\quad \quad if } x < -a \\
		\frac{2}{a}x + 2   & \mbox{\quad \quad if } -a \leq x < -\frac{a}{2} \\
		-\frac{2}{a}x  & \mbox{\quad \quad if } -\frac{a}{2} \leq x \leq \frac{a}{2}\\
		 \frac{2}{a}x-2 & \mbox{\quad \quad if } \frac{a}{2} \leq x \leq a\\
		0  & \mbox{\quad \quad if } a<x,
	\end{array}
\right.$$
where $a > 0$. A direct computation gives that
$${\psi(x)} = \left\{
	\begin{array}{ll}
		0  & \mbox{\quad \quad if } x < -a \\
		\frac{x^3}{3a}+ax^2+ax + \frac{a^2}{3}   & \mbox{\quad \quad if } -a \leq x < -\frac{a}{2} \\
		-\frac{x^3}{3a}+\frac{a}{2}x+\frac{a^2}{4}  & \mbox{\quad \quad if } -\frac{a}{2} \leq x \leq \frac{a}{2}\\
		\frac{x^3}{3a}-ax^2+ax + \frac{a^2}{6} & \mbox{\quad \quad if } \frac{a}{2} \leq x \leq a\\
		\frac{a^2}{2} & \mbox{\quad \quad if } a<x.
	\end{array}
\right.$$
{Let $\com = \{x \in \mathbb{R}^d: \norm{x}\leq 1, \text{ and }x_i \leq 0, \text{  for } 1 \leq i \leq d\}$. {Since for any $x \in \com$, $\phi(x) \geq 0$, then $\psi$ is convex on $\com$, which implies $\alpha$-strong convexity of $f$ on $\com$. Also, the second derivative of $Lh^3 \psi(h^{-1}x)$ is Lipschitz continuous with Lipschitz constant equal to $\frac{2L}{a}$. Therefore $f$ is $\beta$-H\"older with $\beta =3$}. We choose the kernel function, $K: [-1, 1]\to \mathbb{R}$, such that $K(x) = \frac{15}{8}x(5 - 7x^3)$. For each iteration $t$, we fix $h_{t} = t^{-\frac{1}{6}}$, and $\eta_t = \frac{2}{\alpha t}$. Function evaluations at a fixed point $x \in \mathbb{R}^{d}$ are obtained in the form
$f(x) + \zeta$
where $\zeta$ is a random variable
uniformly distributed in $[-5, 5]$.

In this implementation we assign $\alpha = 2$, {$h = 10^{-3}$, $L = 10^{7.5}$}, $a = 10$. We also let $A = B + \mathbbm{I}$, where $B$ is a randomly generated sparse positive definite matrix in $\mathbb{R}^{d\times d}$ and $\mathbbm{I}$ is the $d$-dimensional identity matrix. {For the initialization, we generate a $d$-dimensional Gaussian random variable and project it on $\com$.}

\begin{figure}[H]
\includegraphics[width=0.5\textwidth, height=4.8cm]{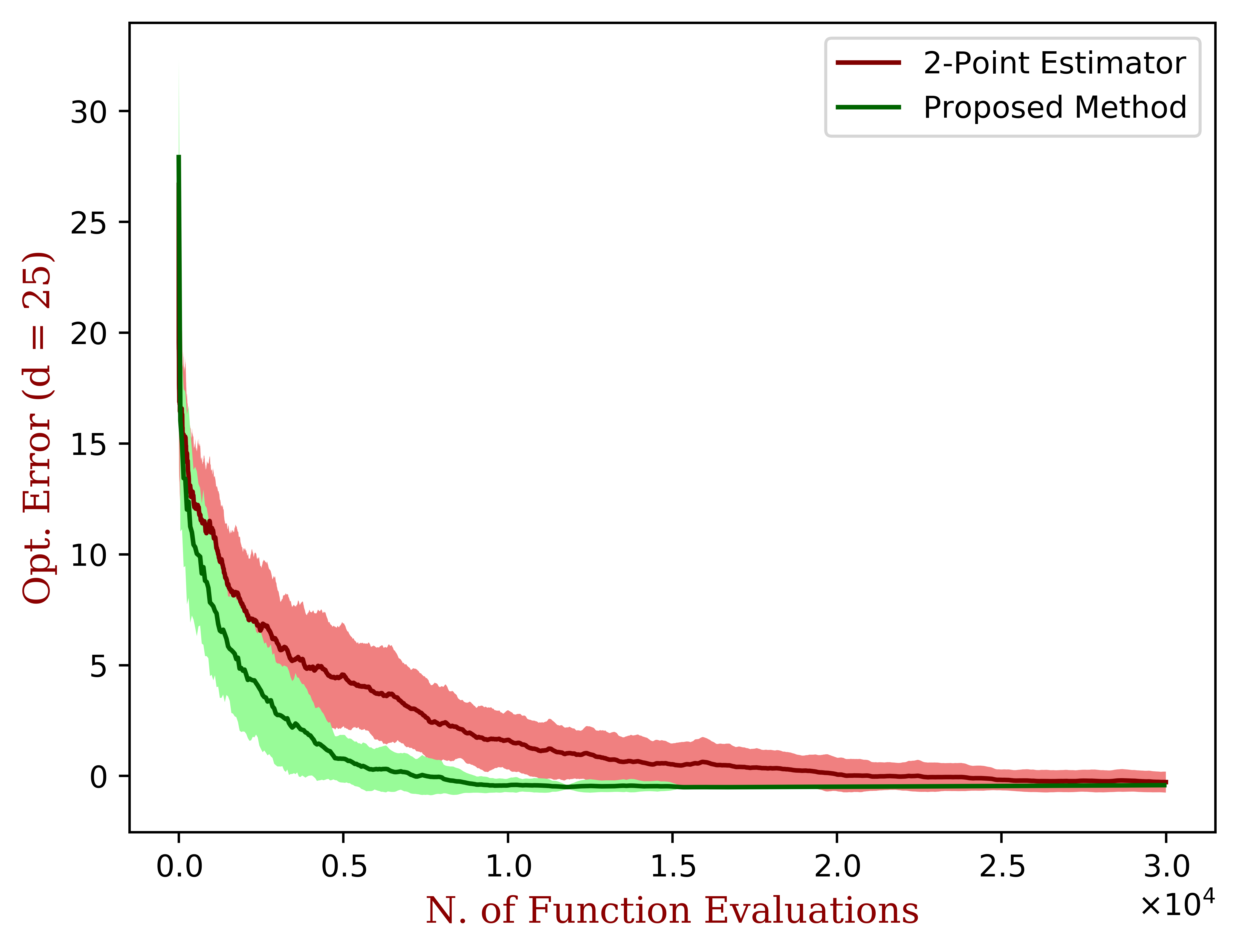}
\includegraphics[width=0.5\textwidth]{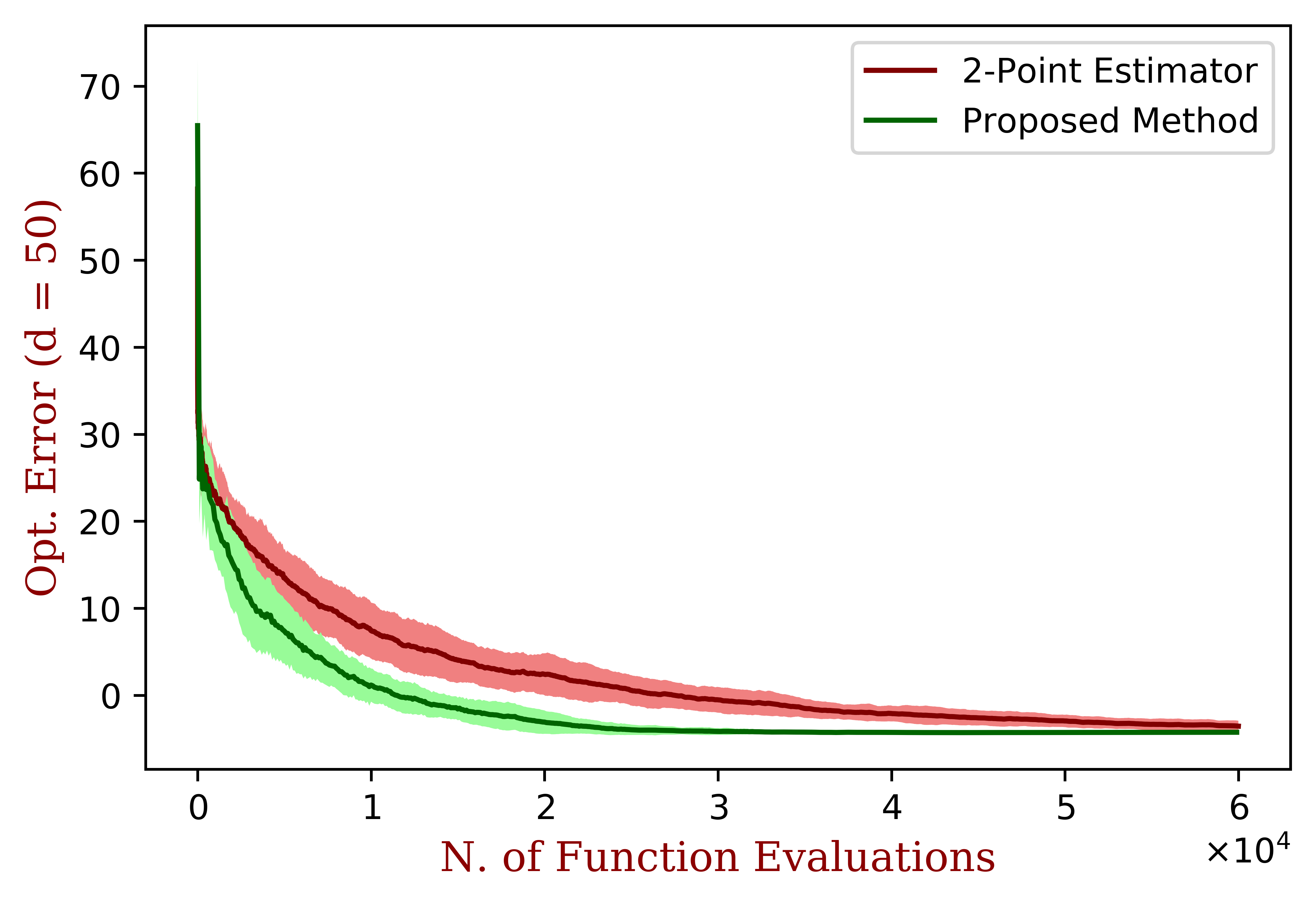}\\
\includegraphics[width=0.5\textwidth]{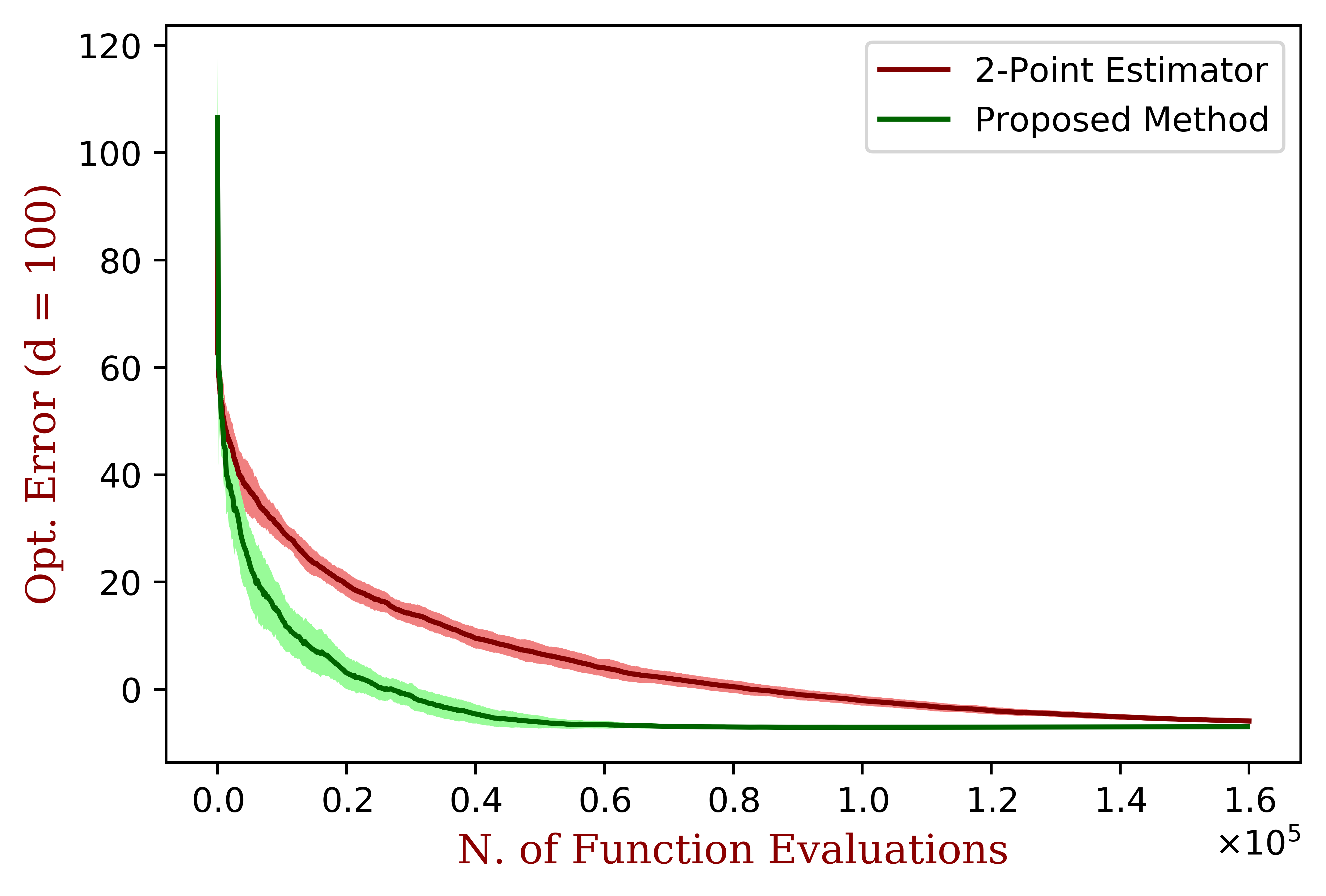}
\includegraphics[width=0.5\textwidth]{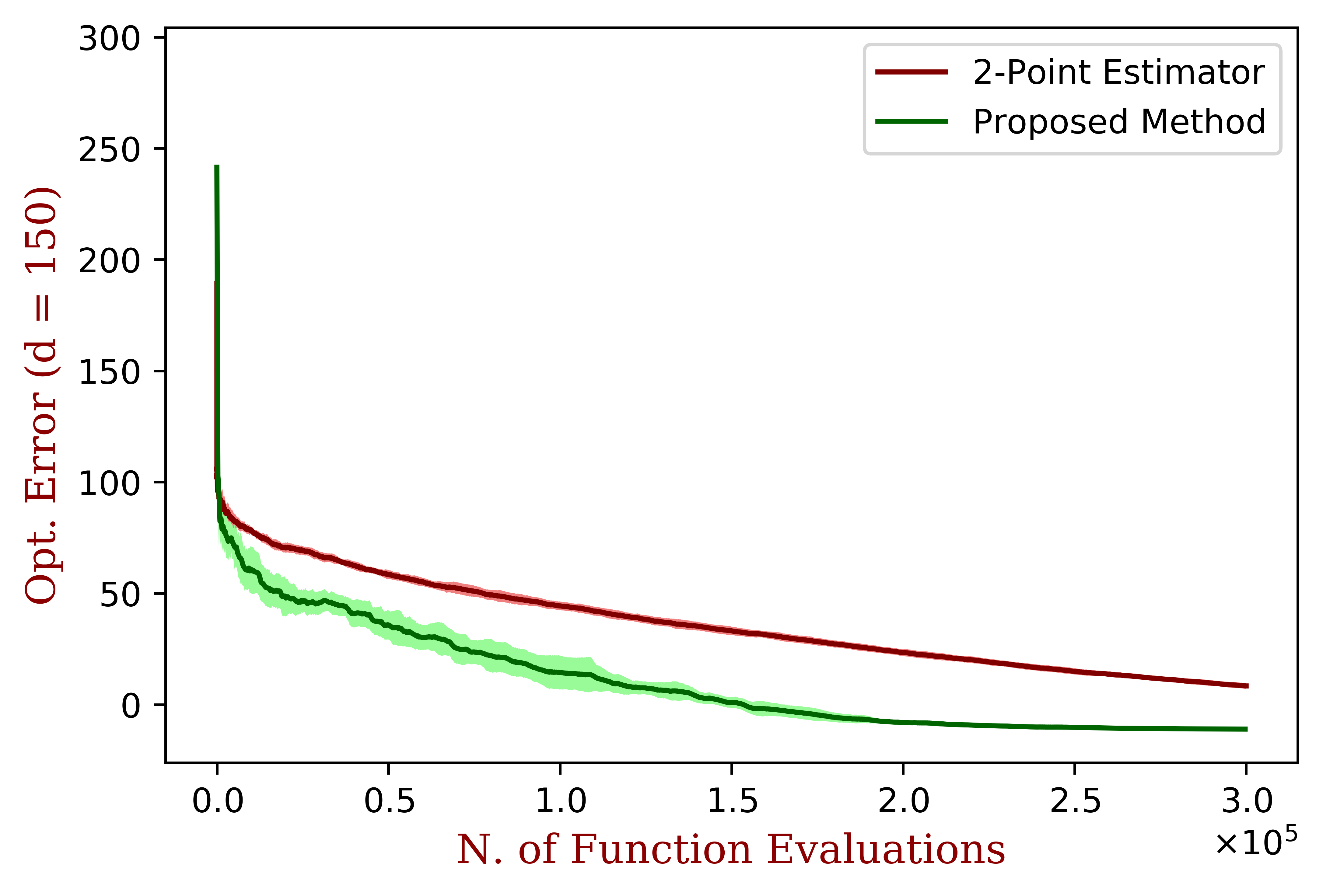}
\caption{Optimization error vs. number of function evaluations for the 2-Point Estimator in \cite{akhavan2020} and our method, run on function \eqref{eq:testfunc} for different number of variables ($d=25, 50, 100, 150$ clockwise from top-left).}
\label{rang}
\end{figure}

{The design of $f$ in \eqref{eq:testfunc} is inspired by the function that has been used in the proof of the lower bound in \cite{akhavan2020}. It is a quadratic function plus the perturbation $Lh^{3}\sum_{i=1}^{d}\psi(h^{-1}x_{i})$, which adds difficulty to estimation of the minimizer. We have chosen this worst case function to provide a comparison between two algorithms in a long run and growing dimension.} In 
Figure \ref{rang} we display the average optimization error of the method proposed in this paper and that of the 2-Point estimator from \cite{akhavan2020} versus the total number of function evaluations, for different dimensions $d$. {This result is averaged over $40$ trials, corresponding to different random initialization, noisy function evaluations and randomization in the optimization procedures.} We would like to emphasize that both methods are considered with the same budget of function evaluations, which means that the number of iterations for the two algorithms differ. Thus, if $T$ is the total number of function evaluations, the 2-point estimator makes $T/2$ iterations, while the proposed method makes only $T/(2d)$ iterations.




 \end{document}